\title{Labelled seeds and the mutation group}
\author{Alastair King}
\address{Alastair King\\Department of Mathematical Sciences\\University of Bath
 \\Claverton Down\\Bath\\Somerset\\BA2 7AY\\United Kingdom}
\email{A.D.King@bath.ac.uk}
\author{Matthew Pressland}
\address{Matthew Pressland\\Department of Mathematical Sciences\\University of Bath
 \\Claverton Down\\Bath\\Somerset\\BA2 7AY\\United Kingdom}
\email{mdpressland@bath.edu}
\subjclass[2010]{13F60 (primary); 22F30 (secondary)}
\keywords{cluster algebra, homogeneous space, quiver mutation}
\date{October 13, 2016}
\crefname{defn}{Definition}{Definitions}
\crefname{eg}{Example}{Examples}
\crefname{lem}{Lemma}{Lemmas}
\crefname{prop}{Proposition}{Propositions}
\crefname{cor}{Corollary}{Corollaries}
\crefname{thm}{Theorem}{Theorems}
\crefname{rem}{Remark}{Remarks}
\crefname{conj}{Conjecture}{Conjectures}
\crefname{ex}{Exercise}{Exercises}
\crefname{prob}{Problem}{Problems}
\renewcommand*\env@matrix[1][*\c@MaxMatrixCols c]{%
  \hskip -\arraycolsep
  \let\@ifnextchar\new@ifnextchar
  \array{#1}}
\theoremstyle{plain}
\newtheorem{thm}{Theorem}[section]
\newtheorem{lem}[thm]{Lemma}
\newtheorem{cor}[thm]{Corollary}
\newtheorem{prop}[thm]{Proposition}
\theoremstyle{definition}
\newtheorem{defn}[thm]{Definition}
\newtheorem{eg}[thm]{Example}
\newtheorem{rem}[thm]{Remark}
\newtheorem{prob}[thm]{Problem}
\theoremstyle{remark}
\newcommand{\noop}[1]{}
\newcommand{\set}[1]{\{#1\}}
\newcommand{\semidir}{\ltimes}
\newcommand{\defeq}{:=}
\newcommand{\eqdef}{=:}
\newcommand{\id}[1]{1_{#1}}
\newcommand{\incl}{\hookrightarrow}
\newcommand{\lquot}{\backslash}
\newcommand{\isom}{\cong}
\newcommand{\isoto}{\to}
\newcommand{\rest}[2]{#1|_{#2}}
\newcommand{\cA}{\mathcal{A}}
\newcommand{\bb}{\mathbf{b}}
\newcommand{\cF}{\mathcal{F}}
\newcommand{\cG}{\mathcal{G}}
\newcommand{\cK}{\mathcal{K}}
\newcommand{\QQ}{\mathbb{Q}}
\newcommand{\cS}{\mathcal{S}}
\newcommand{\ZZ}{\mathbb{Z}}
\newcommand{\Sym}[1]{\mathfrak{S}_{#1}}
\newcommand{\typA}[1]{\mathsf{A}_{#1}}
\newcommand{\afftypA}[1]{\widetilde{\mathsf{A}}_{#1}}
\newcommand{\transp}{^\mathsf{T}}
\newcommand{\tran}[2]{({#1}\;{#2})}
\newcommand{\from}{\leftarrow}
\newcommand{\maut}[2]{\alpha_{#1}^{#2}}
\newcommand{\waut}[1]{\alpha_{#1}}
\newcommand{\morph}[2]{\langle#2,#1\rangle}
\newcommand{\lsgraph}[1]{\Delta(#1)}
\newcommand{\norm}[2]{N_{#1}(#2)}
\newcommand{\normgpd}[2]{N^{#1}_{#2}}
\newcommand{\orbgpd}{/\hspace{-3pt}/}
\newcommand{\act}[2]{\varphi_{#1}^{#2}}
\newcommand{\cW}{W^{\mathrm{c}}}
\newcommand{\dcW}{W^+}
\DeclareMathOperator{\op}{op}
\DeclareMathOperator{\Aut}{Aut}
\DeclareMathOperator{\Stab}{Stab}
\DeclareMathOperator{\Iso}{Iso}
\DeclareMathOperator{\Hom}{Hom}
\DeclareMathOperator{\CMG}{CMG}
\DeclareMathOperator{\im}{im}
\DeclareMathOperator{\cAut}{Aut^{\mathrm{c}}}
\DeclareMathOperator{\dircAut}{Aut^+}
\begin{document}

\begin{abstract}
We study the set $\cS$ of labelled seeds of a cluster algebra of rank $n$ inside a field $\cF$ 
as a homogeneous space for the group $M_n$ of (globally defined) mutations and relabellings. Regular equivalence relations on $\cS$ are associated to subgroups $W$ of $\Aut_{M_n}(\cS)$, 
and we thus obtain groupoids $W\lquot\cS$. 
We show that for two natural choices of equivalence relation, the corresponding groups $\cW$ and $\dcW$ act on $\cF$, and the groupoids $\cW\lquot\cS$ and $\dcW\lquot\cS$ act on the model field $\cK=\QQ(x_1,\dotsc,x_n)$. 
The groupoid $\dcW\lquot\cS$ is equivalent to Fock--Goncharov's cluster modular groupoid. 
Moreover, $\cW$ is isomorphic to the group of cluster automorphisms, and $\dcW$ to the subgroup of direct cluster automorphisms, in the sense of Assem--Schiffler--Shramchenko.

We also prove that, for mutation classes whose seeds have mutation finite quivers, the stabiliser of a labelled seed under $M_n$ determines the quiver of the seed up to `similarity', meaning up to taking opposites of some of the connected components. Consequently, the subgroup $\cW$ is the entire automorphism group of $\cS$ in these cases.
\end{abstract}
\maketitle

\section*{Introduction}
A cluster algebra, defined by Fomin and Zelevinsky in \cite{fomincluster1}, is a type of commutative algebra with a particular combinatorial structure coming from a collection of seeds,
related to each other by mutations. 
However, mutations are defined only locally and there is no group of mutations acting globally on seeds. 
In order to obtain such an action, one considers instead the larger collection of labelled seeds, on which mutations do act as a group, as do permutations of the labels. By studying labelled seeds, with this combined action of relabelling and mutation, we are able to apply the theory of groups and homogeneous spaces to cluster combinatorics. The cluster automorphism group of \cite{assemcluster} and the cluster modular groupoid of \cite{fockcluster1} both appear naturally from this point of view.

It is also natural (and not uncommon) to consider cluster algebras inside any field~$\cF$, over~$\QQ$, isomorphic to $\cK=\QQ(x_1,\dotsc,x_n)$, rather than restricting to subalgebras of $\cK$ itself. 
Each labelled seed then includes a choice of labelled free generating set in $\cF$, or equivalently an isomorphism $\cK\isoto\cF$.

The structure of the paper is as follows. 
In \Cref{labseedgraph}, we recall the definitions of cluster algebras and labelled seeds, and see that the collection of labelled seeds forms a homogeneous space for the mutation group $M_n$, which is  the semidirect product of the free group on $n$ involutions with the symmetric group on $n$ elements. 
One goal will be to study the automorphism group of this homogeneous space.

In \Cref{regequivrels}, we describe the more general theory of regular equivalence relations on homogeneous spaces, and explain the `Galois correspondence' between such relations and subgroups of the automorphism group of the homogeneous space. 
Such a subgroup 
gives rise to an orbit groupoid, defined in \Cref{orbitgroupoid}, whose objects are the orbits under the action of the subgroup.

In \Cref{clustmodgpd}, we return to the setting of labelled seeds, and consider two particular regular equivalence relations, one relating labelled seeds with the same quiver and the other relating labelled seeds with `similar' quivers,
that is, quivers that are the same up to taking the opposite of some connected components. 
By defining a functor on the resulting orbit groupoids, we obtain an action of each groupoid on the model field~$\cK$. We also show that, for the equivalence relation of having the same quiver, the orbit groupoid is equivalent to Fock and Goncharov's cluster modular groupoid \cite{fockcluster1}.

In \Cref{clustautgp}, we show how the subgroups corresponding to our two special equivalence relations act on $\cF$, with elements acting as cluster automorphisms in the sense of \cite{assemcluster}. 
We also observe that the isomorphism $\cK\isoto\cF$, corresponding to any labelled seed, intertwines the action of the orbit groupoid on $\cK$ with the action of the group on $\cF$. We conclude the section by showing that the group corresponding to the relation of having the same quiver is isomorphic to the group $\dircAut{\cA}$ of direct cluster automorphisms. The group $\cAut{\cA}$ of all cluster automorphisms corresponds to the weaker relation of having similar quivers.

We prove, in \Cref{stabsdeterminequivs}, that when the number of quivers occurring in a mutation class of labelled seeds is finite, the relation of having similar quivers is the same as the relation of having the same stabiliser under the mutation action, which is the relation corresponding to the entire automorphism group of the homogeneous space of labelled seeds. Thus in such classes, the stabiliser of a seed determines the similarity class of its quiver.

Finally, in \Cref{orbitgpegs}, we give examples of the sets of equivalence classes of labelled seeds under our two chosen equivalence relations for various explicit quivers. In particular we show that the conclusions of \Cref{stabsdeterminequivs} need not hold in mutation classes with infinitely many quivers.

\section*{Acknowledgements}
We would like to thank Thomas Br\"ustle for drawing our attention to Assem--Schiffler--Shramchenko's paper on cluster automorphisms \cite{assemcluster}, and the referee for helpful suggestions. This paper was written during the second author's Ph.D.\ studies, which were supported by an Engineering and Physical Sciences Research Council studentship.

\section{Cluster algebras and labelled seeds}
\label{labseedgraph}
Cluster algebras were first defined in terms of seeds in \cite{fomincluster2}.
We will consider only a restricted class of cluster algebras, namely those of geometric type, without frozen variables and with skew-symmetric exchange matrices.
Such algebras can be defined as follows.

Let $\cF$ be a purely transcendental field extension of $\QQ$ of transcendence degree $n$. 
A \emph{free generating set} of $\cF$
is a set $\bb\subset\cF$ (necessarily with $n$ elements) such that each element of $\cF$ can be written uniquely as a rational function in the elements of $\bb$, with coefficients in $\QQ$,
i.e.\ $\bb$ is a transcendence basis that generates $\cF$ (cf.\ \cite{berensteinquantum}*{Rem.~2.2}).

A \emph{seed} consists of a free generating set $\bb$ for $\cF$
and a quiver $Q$, with vertex set $\bb$, and without loops or $2$-cycles.
Given $v\in\bb$, we can construct a new seed, called the \emph{mutation} of the original seed at $v$, by replacing $v$ by $v'\in\cF$ satisfying
\[vv'=\prod_{w\in\bb}w^{a_{vw}}+\prod_{w\in\bb}w^{a_{wv}},\]
where $A=(a_{uw})_{u,w\in\bb}$ is the adjacency matrix of $Q$, while replacing $Q$ by its Fomin--Zelevinsky mutation at $v$. 
If we mutate this new seed at $v'$, we recover the original seed. 
Given an initial seed, the elements of all free generating sets belonging to the seeds that can be obtained by iterated mutation from the initial seed are called \emph{cluster variables}.
The subalgebra $\cA\subset\cF$ they generate is a \emph{cluster algebra} of rank~$n$.

Note that, in our context, a quiver comprises no more data than its adjacency matrix.
In particular, an isomorphism of quivers is for us simply a bijection between the vertex sets which preserves the arrow multiplicity between each ordered pair of vertices.
When a quiver has no loops or $2$-cycles, its adjacency matrix $A$ is uniquely determined by the skew-symmetric \emph{exchange matrix} $A-A\transp$. More detailed background information on cluster algebras from quivers can be found in Keller's survey article \cite{kellercluster}.

The automorphism group $\Aut{\cF}$ acts on seeds; an automorphism $\alpha$ replaces each vertex 
variable $x$ by $\alpha(x)$ and leaves the arrows fixed. 
Given a cluster algebra $\cA\subset\cF$, equipped with its collection of seeds, an element $\alpha\in\Aut{\cF}$ is 
a \emph{cluster automorphism} of $\cA$ if for every seed $(Q,\bb)$ of $\cA$, the seed $(Q,\alpha(\bb))$ is also a seed of $\cA$, 
up to reversing the orientation of some connected components of $Q$
(see \cite{assemcluster}*{Defn.~1, Lem.~2.3, Prop.~2.4}).
We say that $\alpha$ is \emph{direct} if $(Q,\alpha(\bb))$ is actually a seed of $\cA$, 
without any reversal of orientation.
Note that any cluster automorphism $\alpha$ permutes the cluster variables of $\cA$ \
and thus restricts to an algebra automorphism of $\cA$. 
We denote the group of cluster automorphisms by $\cAut{\cA}$ and the subgroup of direct cluster automorphisms by $\dircAut{\cA}$.

As defined above, mutation is only defined locally at each seed. However,  it can be convenient to label seeds with some indexing set $I$, so that there is a globally defined mutation operation $\mu_i$ for each $i\in I$.
This variation was introduced implicitly in  \cite{berensteincluster3}*{\S1.1} and explicitly in \cite{fomincluster4}*{Defn.~2.3};
see also \cite{kellercluster}, \cite{reitencluster}.
For simplicity, we take $I=\set{1,\dotsc,n}$, but note that the natural ordering of this set plays no role and any other $n$-element set would do equally well.

\begin{defn}
A \emph{labelled seed} is a pair $(Q,\beta)$ in which $Q$ is a quiver with vertex set $Q_0=I$, and $\beta=(\beta_i : i\in I)\in\cF^I$ freely generates $\cF$.
\end{defn}

We can define the \emph{mutation of $(Q,\beta)$ at $i$} as $(Q,\beta)\cdot\mu_i=(Q',\beta')$,
where the quiver $Q'$ is given by the Fomin--Zelevinsky mutation of $Q$ at the vertex $i$ and
\[
  \beta'_j=\begin{cases}\beta_j,&j\ne i,\\\frac{\prod_{k\in I}\beta_k^{a_{ki}}+\prod_{k\in I}\beta_k^{a_{ik}}}{\beta_i},&j=i,\end{cases}  
\]
where $A=(a_{ij})_{i,j\in I}$ is the adjacency matrix of $Q$. 
If two labelled seeds differ by simultaneous relabelling of the quiver vertices and the free generating set, then they determine the same (unlabelled) seed.
This relabelling may be formulated as a free right action of the symmetric group $\Sym{n}$ on the set of labelled seeds: for a permutation $\sigma\in\Sym{n}$ we define $(Q,\beta)\cdot\sigma=(Q^\sigma,\beta^\sigma)$, where $\beta^\sigma_i=\beta_{\sigma(i)}$ and $Q^\sigma$ is the quiver with vertex set $I$ and adjacency matrix $a^{\sigma}_{ij}=a_{\sigma(i)\sigma(j)}$.

\begin{defn}
The relabelling action of $\Sym{n}$ combines with the mutation action of the $\mu_i$ 
to give a right action of the \emph{mutation group}
\begin{equation*}\label{mutationgroup}
  M_n:=\Sym{n}\semidir\langle\mu_1,\dotsc,\mu_n:\mu_i^2=1\rangle
\end{equation*}
on the set of labelled seeds. It is a semi-direct product because, for any labelled seed $(Q,\beta)$, any $i\in I$ and any $\sigma\in\Sym{n}$, we have
\[
  ((Q,\beta)\cdot\sigma)\cdot\mu_i=((Q,\beta)\cdot\mu_{\sigma(i)})\cdot\sigma.
\]
\end{defn}

The inclusion of $\Sym{n}$ in $M_n$ is the key to recovering the usual cluster combinatorics of unlabelled seeds via the group action.
In particular, we can define the \emph{mutation class} of a labelled seed $(Q,\beta)$ simply to be its orbit $\cS$ under the $M_n$-action and consider that such an orbit constitutes a \emph{cluster structure} on $\cF$: the clusters are the sets $\set{\beta_1,\dotsc,\beta_n}$ for each $(Q,\beta)\in\cS$, and, as usual, the set of cluster variables is the union of all clusters and the cluster algebra $\cA(\cS)$ is the subalgebra of $\cF$ generated by all cluster variables. Note that $\cA(\cS)$ can also be described as the cluster algebra generated in the usual way by the unlabelled seed naturally attached to any labelled seed of $\cS$.
We also note that a cluster structure $\cS$ is a homogeneous space for $M_n$, an observation that motivates our approach in this paper.

Another benefit to considering the set of labelled seeds is that the action of $\langle\mu_i : i\in I \rangle$ can be encoded by the structure of a labelled graph, which we will denote by $\lsgraph{\cF}$: its vertices are the labelled seeds of $\cF$,  two of which are joined by an edge labelled $i\in I$ when they are related by mutation at $i$. 
In other words, $\mu_i$ acts by interchanging all pairs of vertices that are adjacent along an edge labelled $i$.
The graph $\lsgraph{\cF}$ is $n$-regular as a labelled graph, i.e.\ each vertex is incident with exactly one edge labelled $i$ for each $i\in I$. 
The action of $\Sym{n}$, taking a seed $(Q,\beta)$ to $(Q^\sigma,\beta^\sigma)$, is equivariant, i.e.\ edges labelled $\sigma(i)$ are taken to edges labelled $i$.
Thus the quotient of $\lsgraph{\cF}$ by $\Sym{n}$ is not a labelled graph.

Given a cluster structure on $\cF$, that is, a mutation class $\cS$ of labelled seeds,
we will denote the full subgraph of $\lsgraph{\cF}$ on $\cS$ by $\lsgraph{\cS}$. 
This graph is not necessarily connected, but, by construction, the mutations $\langle\mu_i\rangle$ act transitively on each connected component. 
Thus each component is a quotient of the Cayley graph of $\langle\mu_i\rangle$, that is, the $n$-regular labelled tree. 
The quotient of $\lsgraph{\cS}$ by the $\Sym{n}$-action is connected and is the \emph{cluster exchange graph},
that is, its vertices are unlabelled seeds and its (unlabelled) edges correspond to mutations between them.

\begin{eg}
\label{a2eg}
Take $\cF=\QQ(x,y)$. The cluster algebra of type $\typA{2}$ with initial (unlabelled) seed $x\to y$ is the subalgebra $\QQ[x,y,\frac{1+x}{y},\frac{1+y}{x},\frac{1+x+y}{xy}]$ of $\cF$. 
Let $s_1$ be the labelled seed
\[s_1=\Big(1\to 2,(x,y)\Big).\]
The orbit $\cS$ of $s_1$ under $M_n$ consists of the $10$ labelled seeds
\begin{align*}
s_1=&\Big(1\to 2,(x,y)\Big),&s_6=&\Big(1\leftarrow2,(y,x)\Big),\\
s_2=&\Big(1\leftarrow 2,\big(\tfrac{1+y}{x},y\big)\Big),&s_7=&\Big(1\to2,\big(y,\tfrac{1+y}{x}\big)\Big),\\
s_3=&\Big(1\to 2,\big(\tfrac{1+y}{x},\tfrac{1+x+y}{xy}\big)\Big),&s_8=&\Big(1\leftarrow2,\big(\tfrac{1+x+y}{xy},\tfrac{1+y}{x}\big)\Big),\\
s_4=&\Big(1\leftarrow 2,\big(\tfrac{1+x}{y},\tfrac{1+x+y}{xy}\big)\Big),&s_9=&\Big(1\to2,\big(\tfrac{1+x+y}{xy},\tfrac{1+x}{y}\big)\Big),\\
s_5=&\Big(1\to 2,\big(\tfrac{1+x}{y},x\big)\Big),&s_{10}=&\Big(1\leftarrow2,\big(x,\tfrac{1+x}{y}\big)\Big),
\end{align*}
indexed by $\ZZ_{10}$.
The seed $s_i$ is related to $s_{i+1}$ by mutation at $1$ (for odd $i$) or $2$ (for even $i$).
So $\Delta(\cS)$ is the graph
\[ 
\begin{tikzpicture}[scale=1.5]
\foreach \n in {1,...,10}
 \node at (126-36*\n:2) (s\n) {$s_{\n}$};
\foreach \a/\b in {1/2, 3/4, 5/6, 7/8, 9/10}
\path[font=\scriptsize] (s\a) edge node [auto] {$1$} (s\b);
\foreach \a/\b in {2/3, 4/5, 6/7, 8/9, 10/1}
\path[font=\scriptsize] (s\a) edge node [auto] {$2$} (s\b);
\end{tikzpicture}
\] 
The group $M_2$ acts on the vertex set with the involution $\mu_i$ exchanging vertices adjacent along an edge labelled by $i$
and the transposition $\tran12$  taking $s_i$ to $s_{i+5}$. 
Note that $(\mu_1\mu_2)^5\in M_2$ fixes all vertices, so the action is not faithful.
Indeed, a complete set of relations is
\[ \mu_1\mu_2\mu_1\mu_2\mu_1 = \tran12 =  \mu_2\mu_1\mu_2\mu_1\mu_2.\]

The quotient by the $\Sym{2}$-action recovers the cluster exchange graph of type $\typA{2}$, the familiar pentagon (\cite{kellercluster}*{\S3.5}). It is not possible to consistently label the edges of the pentagon.
\end{eg}

Another reason for considering labelled seeds, which will play an important role in this paper, is that while roughly speaking a free generating set for $\cF$ determines an isomorphism with a model field, this is only actually correct if the set is labelled. 
Thus, if $\cK= \QQ(x_1,\dotsc,x_n)$, then a labelled cluster $\beta=(\beta_1,\dotsc,\beta_n)$ determines, and is determined by, an isomorphism  $\cK \to \cF$, which we also denote by $\beta$, with $\beta(x_i)=\beta_i$. 

From this point of view, there is another way of describing the right action of $M_n$ on the set of labelled seeds.
Let $\maut{\mu_i}{Q}\colon\cK\to\cK$ be the automorphism
\[
  \maut{\mu_i}{Q}(x_j)=\begin{cases}x_j,&j\ne i,\\\frac{\prod_{k\in I}x_k^{a_{ki}}+\prod_{k\in I}x_k^{a_{ik}}}{x_i},&j=i.\end{cases}
\]
If $(Q',\beta')=(Q,\beta)\cdot\mu_i$, then there is an equality $\beta' = \beta\circ\maut{\mu_i}{Q}$ of maps $\cK\to \cF$, 
i.e.\ the map $\maut{\mu_i}{Q}$ is a `change of coordinates' depending on $Q$.
For $\sigma\in\Sym{n}$, let $\maut{\sigma}{}\colon\cK\to\cK$ be the coordinate permutation  $\maut{\sigma}{}(x_i)=x_{\sigma(i)}$.
Then we also have $\beta^\sigma=\beta\circ\maut{\sigma}{}$. 
It will be convenient to write $\maut{\sigma}{Q}\defeq\maut{\sigma}{}$, even though this automorphism does not depend on the quiver.

More generally, for any $g\in M_n$, write $g=g_1\dotsm g_k$, where each $g_i$ is either a mutation or a permutation, and define
\[
  \maut{g}{Q}=\maut{g_1}{Q}\circ\maut{g_2}{Q\cdot g_1}\circ\dotsb\circ\maut{g_k}{Q\cdot g_1\dotsm g_{k-1}}.
\]
If $(Q',\beta') = (Q,\beta)\cdot g$, then $\beta' =\beta\circ\maut{g}{Q}$. 
Hence $\maut{g}{Q}=\beta^{-1}\circ\beta'\colon \cK\to \cK$ and so it is independent of the chosen expression for $g$.

There is a natural action of $\Aut{\cF}$ on labelled seeds, analogous to the action on seeds described above, by
$\alpha\colon (Q,\beta) \mapsto (Q,\alpha\circ\beta)$, for $\alpha\in\Aut{\cF}$.
This action commutes with the $M_n$-action and thus induces an action on $M_n$-orbits, i.e.\ cluster structures.
Let $\cS$ be a cluster structure and let $\cA(\cS)$ be the associated cluster algebra. If $(Q,\bb)$ is any seed of $\cA(\cS)$, we can produce a labelled seed $(Q,\beta)$ by arbitrarily labelling the elements of $\bb$ as $\beta_1,\dotsc,\beta_n$ and replacing the vertex $\beta_i$ of $Q$ by $i$. The $n!$ possible relabellings are related by the $\Sym{n}$-action, so all lie in $\cS$. It follows that $(Q,\alpha(\bb))$ is a seed of $\cA(\cS)$ if and only if $(Q,\alpha\circ\beta)\in\cS$ for any associated labelled seed $(Q,\beta)$, and so the stabiliser of $\cS$ under the $\Aut{\cF}$-action is the direct cluster automorphism group $\dircAut{\cA(\cS)}$.
Similar arguments show that $\cAut{\cA(\cS)}$ is the stabiliser of the union of all cluster structures whose labelled seeds have quivers related to those of the labelled seeds of $\cS$ by reversing the orientation of some collection of connected components.
Later (\Cref{clustautthm}) we will give alternative descriptions of $\cAut{\cA(\cS)}$ and $\dircAut{\cA(\cS)}$, each depending only on the structure of the single cluster structure $\cS$ as a homogeneous space equipped with particular equivalence relations.

The equivalence relations we will consider have the property that their equivalence classes are orbits of subgroups of the automorphism group $\Aut_{M_n}(\cS)$ of the homogeneous space $\cS$. We now give a characterisation of such equivalence relations, and describe some constructions arising from them.

\section{Regular equivalence relations}
\label{regequivrels}
In this section and the next, we will work in the setting of a general homogeneous space, rather than the set of labelled seeds discussed in \Cref{labseedgraph}. Let $X$ be a homogeneous space for a group $G$, acting on the right. Then we define the \emph{automorphism group} of $X$, denoted $\Aut_G(X)$, to be the group of bijections $\varphi\colon X\to X$ that commute with the action of $G$. We will take composition in $\Aut_G(X)$ to be right-to-left, so that $\Aut_G(X)$ acts naturally on the left of $X$ by $\varphi\cdot x=\varphi(x)$, and
\[(\varphi\cdot x)\cdot g=\varphi\cdot(x\cdot g)\]
for all $\varphi\in\Aut_G(X)$, $x\in X$ and $g\in G$.

In this section, we define homogeneity and regularity of equivalence relations on $X$, and demonstrate a Galois correspondence between regular equivalence relations on $X$ and subgroups of $\Aut_G(X)$. The results of this section are analogous to results on regular coverings in topology (see \cite{fultonalgebraic}*{\S13b}).
\begin{defn}
\label{homequiv}
An equivalence relation $\sim$ on $X$ is \emph{homogeneous} if $x\cdot g\sim y\cdot g$ for any $g\in G$ whenever $x\sim y$.
\end{defn}
\begin{defn}
\label{regequiv}
An equivalence relation $\sim$ on $X$ is \emph{regular} if it is homogeneous and $\Stab_G(x)=\Stab_G(y)$ whenever $x\sim y$.
\end{defn}
\begin{defn}
\label{normalequiv}
An equivalence relation $\sim$ on $X$ is \emph{normal} if it is regular and $\varphi(x)\sim\varphi(y)$ for any $\varphi\in\Aut_G(X)$ whenever $x\sim y$.
\end{defn}

In the case of mutation classes of labelled seeds, we will be particularly interested in the following two regular equivalence relations.

\begin{eg}
\label{regequivdefs}
Let $\cS$ be a mutation class of labelled seeds and define an equivalence relation $\simeq$ on $\cS$ such that $(Q_1,\beta_1)\simeq (Q_2,\beta_2)$ if and only if $Q_1=Q_2$; as all of our quivers have the same vertex set, we say $Q_1=Q_2$ when the quivers have identical adjacency matrices. Note that this condition is stronger than requiring only that $Q_1$ and $Q_2$ are isomorphic. This relation is homogeneous, and if $(Q,\beta)\cdot g=(Q,\beta)$ for some $g\in M_n$, then $\maut{g}{Q}=\id{\cK}$, and so $(Q,\gamma)\cdot g=(Q,\gamma)$ for all $\gamma$. Thus $\simeq$ is regular.

We can define another equivalence relation $\approx$ on $\cS$ such that $(Q_1,\beta_1)\approx(Q_2,\beta_2)$ if and only if $Q_1$ is obtained from $Q_2$ by reversing the orientation of all arrows in a set of components; we will also write $Q_1\approx Q_2$ in this case. As $(Q\cdot\mu_i)^{\op}=Q^{\op}\cdot\mu_i$ and $(Q^{\op})^\sigma=(Q^\sigma)^{\op}$ for any quiver $Q$ and vertex $i$, and $\maut{g}{Q_1}=\maut{g}{Q_2}$ when $Q_1\approx Q_2$, this equivalence relation is also regular. If $(Q_1,\beta_1)\approx(Q_2,\beta_2)$, we say that $(Q_1,\beta_1)$ and $(Q_2,\beta_2)$ are \emph{similar}, and that $Q_1$ and $Q_2$ are similar.

To illustrate these relations, recall the mutation class $\cS$ from \Cref{a2eg}. All $10$ seeds in $\cS$ are similar as the only quivers occurring are $1\to 2$ and $1\from 2$, which are opposites of each other. These two quivers are not equal (despite being isomorphic), so the equivalence classes under $\simeq$ are $\set{s_2,s_4,s_6,s_8,s_{10}}$ and $\set{s_1,s_3,s_5,s_7,s_9}$.
\end{eg}

The largest possible regular homogeneous equivalence relation (i.e.\ that with the largest equivalence classes) on any homogeneous space $X$ for a right $G$-action is the relation declaring $x,y\in X$ to be equivalent if and only if $\Stab_G(x)=\Stab_G(y)$.

Denote the equivalence class of $x$ under $\sim$ by $[x]$. If $\sim$ is homogeneous, then $[x]=[y]$ implies $[x\cdot g]=[y\cdot g]$ for all $g\in G$, so the set $X/{\sim}$ of equivalence classes admits a natural $G$-action by $[x]\cdot g=[x\cdot g]$. Thus $X$ has the structure of a homogeneous bundle over the set $X/{\sim}$ of equivalence classes. Choosing $x\in X$ allows us to identify $X$ with the quotient $G/\Stab_G(x)$, and $X/{\sim}$ with the quotient $G/\norm{G}{[x]}$, where
\[
\norm{G}{[x]}\defeq\set{g\in G:x\cdot g\sim x}
\]
is the stabiliser of the equivalence class $[x]$ under the induced action of $G$ on $X/{\sim}$.
\begin{lem}
\label{regiffnormal}
Let $\sim$ be a homogeneous equivalence relation on $X$. Then $\sim$ is regular if and only if $\Stab_G(x)$ is normal in $\norm{G}{[x]}$ for each $x\in X$.
\end{lem}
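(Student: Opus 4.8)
The plan is to reduce both conditions to a single statement about conjugates of $\Stab_G(x)$, exploiting transitivity of the $G$--action on $X$. The key computation is the standard conjugation formula for stabilizers under a right action: if $y=x\cdot g$, then $\Stab_G(y)=g^{-1}\Stab_G(x)g$. Indeed, for $h\in G$ one has $y\cdot h=y$ if and only if $x\cdot(ghg^{-1})=x$, so $h\in\Stab_G(y)$ exactly when $ghg^{-1}\in\Stab_G(x)$, i.e.\ $h\in g^{-1}\Stab_G(x)g$.

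The second ingredient is a dictionary between the elements $y\sim x$ and the elements of $N_G([x])$. Because $X$ is homogeneous, any $y$ with $y\sim x$ may be written $y=x\cdot g$ for some $g\in G$, and then $x\cdot g=y\sim x$ says precisely that $g\in N_G([x])$; conversely every $g\in N_G([x])$ produces an element $x\cdot g\sim x$. (That $N_G([x])$ is a subgroup containing $\Stab_G(x)$ uses homogeneity of $\sim$ together with symmetry and transitivity, as already observed before the statement.)

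Granting these two observations, both implications follow immediately. For the forward direction, assume $\sim$ is regular and fix $x\in X$; for any $g\in N_G([x])$ put $y=x\cdot g\sim x$, so regularity gives $\Stab_G(y)=\Stab_G(x)$ while the conjugation formula gives $\Stab_G(y)=g^{-1}\Stab_G(x)g$, whence $g^{-1}\Stab_G(x)g=\Stab_G(x)$ for every $g\in N_G([x])$, which is exactly the statement that $\Stab_G(x)$ is normal in $N_G([x])$. For the converse, assume this normality holds for each $x$, and take $x\sim y$; choosing $g$ with $y=x\cdot g$ we have $g\in N_G([x])$, so normality yields $g^{-1}\Stab_G(x)g=\Stab_G(x)$, and the conjugation formula then gives $\Stab_G(y)=\Stab_G(x)$, establishing regularity.

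I do not expect a genuine obstacle here, since the argument is a direct translation once the two observations are in place. The only points requiring care are the right--action convention in the conjugation formula (conjugation by $g^{-1}$ on the left), and the fact that it is transitivity of the $G$--action, rather than homogeneity of $\sim$, that allows each $y\sim x$ to be realised as a translate $x\cdot g$.
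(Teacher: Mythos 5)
Your proof is correct and follows essentially the same route as the paper's: both directions reduce to the conjugation formula $\Stab_G(x\cdot g)=g^{-1}\Stab_G(x)g$ for the right action, combined with the observation that elements $y\sim x$ correspond (via transitivity) to elements $g\in N_G([x])$. The paper simply states the two displayed chains of equalities without spelling out the two preliminary observations, which you have correctly identified and justified.
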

\begin{proof}
Let $\sim$ be regular. For any $h\in\norm{G}{[x]}$, we have $x\sim x\cdot h$, so
\[\Stab_G(x)=\Stab_G(x\cdot h)=h^{-1}\Stab_G(x)h.\]
Conversely, suppose $\Stab_G(x)$ is normal in $\norm{G}{[x]}$. If $y\sim x$, then $y=x\cdot h$ for some $h\in\norm{G}{[x]}$. We have
\[\Stab_G(y)=\Stab_G(x\cdot h)=h^{-1}\Stab_G(x)h=\Stab_G(x),\]
and so $\sim$ is regular.
\end{proof}

Thus if $\sim$ is regular, the fibre $[x]$ of $X$ as a bundle over $X/{\sim}$ is a torsor for the group $\norm{G}{[x]}/\Stab_G(x)$. We will see that these subquotients are all (non-canonically) isomorphic to the same subgroup of $\Aut_G(X)$, because there is in fact a Galois correspondence between regular equivalence relations and subgroups of $\Aut_G(X)$ (cf.\ \cite{fultonalgebraic}*{\S13d}).

\begin{prop}
\label{regequivlem}
Regular equivalence relations on $X$ are in one-to-one correspondence with subgroups of $\Aut_G(X)$, in such a way that the equivalence classes are the orbits of the corresponding subgroup. 
A regular equivalence relation $\sim$ is normal if and only if the corresponding subgroup is normal in $\Aut_G(X)$.
\end{prop}

\begin{proof}
Note first that $\Aut_G(X)$ acts freely on $X$, because $G$ acts transitively. 
Also, for any $w\in\Aut_G(X)$, we have $\Stab_G(w\cdot x)=\Stab_G(x)$, because, for any $g\in G$,
\[
  w\cdot x =(w\cdot x)\cdot g=w\cdot(x\cdot g) \iff x= x\cdot g.
\]
Thus, for any $W\leq\Aut_G(X)$, its orbits are the equivalence classes of a regular equivalence relation and, because the action is free, different subgroups define different equivalence relations.

To prove the correspondence, we must show that every regular equivalence relation on $X$ arises in this way.
So suppose that $\sim$ is a regular equivalence relation and let 
\[
  W = \set{w\in\Aut_G(X) : w\cdot x\sim x,\; \text{for all $x\in X$}}.
\]
By definition, for any $x\in X$, the orbit $W\cdot x$ is contained in the equivalence class $[x]$, 
so it remains show that $W$ acts transitively on each equivalence class
(cf.\ \cite{fultonalgebraic}*{Ex.~13.13}).

Fix any $x\in X$ and any $y\in [x]$.
We claim that the map $w^x_y\colon X\to X \colon x\cdot h\mapsto y\cdot h$ is a well-defined element of $W$, 
which clearly maps $x$ to $y$.
Most importantly, the fact that $w^x_y$ is a well-defined bijection (with inverse $w^y_x$) uses 
the regularity of $\sim$, noting that
\[
x\cdot h=x\cdot k \iff kh^{-1}\in\Stab_G(x)=\Stab_G(y) \iff y\cdot h=y\cdot k.
\]
Then $w^x_y\in\Aut_G(X)$, i.e. $w^x_y$ commutes with the action of $G$, as
\[
 w^x_y(x\cdot hg)=y\cdot hg=w^x_y(x\cdot h)\cdot g
\]
and it remains to show that $w^x_y\in W$, i.e. that $w^x_y(z)\sim z$, for all $z\in X$.
This follows by writing $z=x\cdot h$, so that $w^x_y(z)=y\cdot h\sim x\cdot h$, as
$\sim$ is homogeneous. 
Thus $W$ acts transitively on any equivalence class $[x]$, as required.

To prove the final part of the proposition, let $\sim$ be a regular equivalence relation and $W$ be the corresponding subgroup. The relation $\sim$ is normal if and only if
\[
x\sim y\iff\varphi(x)\sim\varphi(y),
\]
for all $\varphi\in\Aut_G(X)$. Equivalently, every $\varphi\in\Aut_G(X)$ induces bijections between equivalence classes, so $\varphi([x])=[\varphi(x)]$ for all $x\in X$. As $[x]=W\cdot x$, this is equivalent to requiring $\varphi W\cdot x=W\varphi\cdot x$ for all $x\in X$, i.e.\ that $\varphi W=W\varphi$. Thus $\sim$ is normal if and only if $W$ is normal.
\end{proof}

As promised, a standard consequence of the Galois correspondence is as follows
(cf.\ \cite{fultonalgebraic}*{Thm.~13.11} and \cite{browntopology}*{10.6.4}).

\begin{prop}
\label{Wissubquotient}
Given a regular equivalence relation $\sim$, the corresponding subgroup $W\leq\Aut_G(X)$ is isomorphic to $\norm{G}{[x]}/\Stab_G(x)$ for any $x\in X$.
\end{prop}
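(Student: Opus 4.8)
The plan is to construct an explicit isomorphism between $N_G([x])/\Stab_G(x)$ and $W$ using the elements $w^x_g$ built in the proof of \cref{regequivlem}. The key observation is that the construction already produced, for each $g \in N_G([x])$, an automorphism $w^x_g \in W$ defined by $w^x_g \cdot (x\cdot h) = x \cdot gh$. So I would define a map $\Phi\colon N_G([x]) \to W$ by $\Phi(g) = w^x_g$ and show that it is a surjective group homomorphism with kernel exactly $\Stab_G(x)$.

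First I would check that $\Phi$ is a homomorphism. For $g_1, g_2 \in N_G([x])$, I compute $(w^x_{g_1} \circ w^x_{g_2}) \cdot (x\cdot h)$; applying $w^x_{g_2}$ gives $x \cdot g_2 h$, and then applying $w^x_{g_1}$ gives $x \cdot g_1 g_2 h = w^x_{g_1 g_2}\cdot(x\cdot h)$, so $\Phi(g_1)\Phi(g_2) = \Phi(g_1 g_2)$ (this also confirms $N_G([x])$ is closed under multiplication, which is immediate from homogeneity anyway). Next, surjectivity: given any $w \in W$, since the $G$--action is transitive I may write $w\cdot x = x\cdot g$ for some $g \in G$; because $w$ preserves equivalence classes, $x \cdot g = w\cdot x \sim x$, so $g \in N_G([x])$, and one checks $w = w^x_g = \Phi(g)$ since both agree on $x$ and commute with the transitive $G$--action. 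Finally I would identify the kernel: $\Phi(g) = \id$ means $x\cdot gh = x\cdot h$ for all $h$, equivalently $x \cdot g = x$, which is exactly the condition $g \in \Stab_G(x)$.

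The only genuinely delicate point is that two automorphisms in $\Aut_G(X)$ that agree on the single point $x$ must be equal; this is where transitivity of the $G$--action is essential, since every point is $x\cdot h$ and $w(x\cdot h) = w(x)\cdot h$ is then determined by $w(x)$. Much of the well-definedness work (that $w^x_g$ is a well-defined element of $\Aut_G(X)$, independent of the representative $h$, and that it lies in $W$) has already been carried out in the proof of \cref{regequivlem}, so I would simply cite it rather than repeat it. Applying the first isomorphism theorem to $\Phi$ then yields $W \cong N_G([x])/\Stab_G(x)$. I do not expect any serious obstacle here; the proposition is essentially a repackaging of the correspondence established in \cref{regequivlem}, and the main thing to get right is bookkeeping about on which side the various groups act.
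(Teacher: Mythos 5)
Your proof is correct, and it is built from the same ingredients as the paper's, but assembled in the opposite direction. The paper observes that the commuting actions of $N_G([x])/\Stab_G(x)$ (on the right) and $W$ (on the left) on the orbit $[x]$ are both free and transitive, so $[x]$ is a bitorsor, and a choice of basepoint $y\in[x]$ identifies each $w\in W$ with the unique $g$ satisfying $w\cdot y=y\cdot g$; you instead promote the assignment $g\mapsto w^x_g$ from the proof of \cref{regequivlem} to a group homomorphism $N_G([x])\to W$ and apply the first isomorphism theorem. These constructions produce mutually inverse maps, and the central computation is identical: your verification $w^x_{g_1}\circ w^x_{g_2}=w^x_{g_1g_2}$ is the paper's $w_1w_2\cdot y=y\cdot g_1g_2$ read backwards. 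Your route buys a little: surjectivity plus identification of the kernel replaces the paper's explicit free-and-transitive bookkeeping, and the normality of $\Stab_G(x)$ in $N_G([x])$ falls out automatically as the kernel of a homomorphism, rather than being needed in advance (via \cref{regiffnormal}) just so that the quotient group makes sense. What the paper's formulation buys is the explicit bitorsor structure and the visible dependence of the isomorphism on the choice of point in the orbit, which is reused in the discussion following \cref{clustautisCMG}. Finally, the point you flag as delicate --- that two elements of $\Aut_G(X)$ agreeing at a single point must coincide, by transitivity of the $G$--action --- is precisely the freeness of the $W$--action that the paper invokes, so the two arguments rest on the same underlying fact and your proposal has no gap.
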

\begin{proof}
Note that by assumption, $\norm{G}{[x]}$ acts transitively on $[x]$. By regularity of $\sim$, all elements of $[x]$ have stabiliser $\Stab_G(x)$ under the $G$-action, so $\norm{G}{[x]}/\Stab_G(x)$ acts freely and transitively on $[x]$. As $W\leq\Aut_G(X)$, it also acts freely on $[x]$, and the action is transitive by \Cref{regequivlem}. These two actions commute because $\norm{G}{[x]}\leq G$ and $W\leq\Aut_G(X)$. Thus $[x]$ is a bitorsor for $\norm{G}{[x]}/\Stab_G(x)$ and $W$, and choosing any $y\in [x]$ yields an isomorphism between the two groups. Explicitly, having chosen $y$, we identify each $w\in W$ with the unique $g\in\norm{G}{[x]}/\Stab_G(x)$ such that $w\cdot y=y\cdot g$. This identification is well-defined and bijective because the actions are free and transitive. It is a homomorphism, because when $w_1\cdot y=y\cdot g_1$ and $w_2\cdot y=y\cdot g_2$, we have
\[w_1w_2\cdot y=w_1\cdot(y\cdot g_2)=(w_1\cdot y)\cdot g_2=y\cdot g_1g_2\]
as required.
\end{proof}

\begin{eg}
We illustrate \Cref{regequivlem} by returning to the example of type $\typA{2}$, from \Cref{a2eg}. Let $G=M_2$ and let $\cS$ be the orbit of $s_1$. Recall that $M_2$ acts on $\cS$ with $s\cdot\mu_i$ given by the unique vertex adjacent to $s$ along an edge labelled $i$ and with the transposition $\tran{1}{2}$ acting in the same way as $\mu_1\mu_2\mu_1\mu_2\mu_1$, so it will suffice to consider the action of the mutations. 

Let $\approx$ be the equivalence relation on $\cS$ with $(Q_1,\beta_1)\approx(Q_2,\beta_2)$ if and only if $Q_1\approx Q_2$, as in \Cref{regequivdefs}, and let $\cW$ be the corresponding subgroup of $\Aut_{M_2}(\cS)$. As there is only a single equivalence class under $\approx$, the group $\cW$ acts transitively, and in fact $\cW=\Aut_{M_2}(\cS)$. This automorphism group is isomorphic to $D_5$, the dihedral group of order $10$, generated by the rotation $s_i\mapsto s_{i+2}$, and the reflection interchanging $s_{1-k}$ and $s_{2+k}$ for $0\leq k\leq 4$. (Drawing $\cS$ as in \Cref{a2eg}, the reflection is a genuine reflection in the vertical axis.)

Every $s\in\cS$ has the same stabiliser under the $M_2$-action, namely the subgroup $H$ generated by $\tran{1}{2}\mu_1\mu_2\mu_1\mu_2\mu_1$ and $(\mu_1\mu_2)^5$. Therefore the mutations $\mu_1$ and $\mu_2$ generate a free right action of $M_2/H\isom D_5$. Although $\Aut_{M_2}(\cS)\isom M_2/H$, there is no natural isomorphism between the two groups; indeed, the set $\cS$ is a bitorsor for $\Aut_{M_2}(\cS)$ acting on the left and $M_2/H$ acting on the right, so each choice of $s\in\cS$ determines an isomorphism (cf.\ \Cref{Wissubquotient}).

Now let $\simeq$ be the equivalence relation on $\cS$ with $(Q_1,\beta_1)\simeq(Q_2,\beta_2)$ if and only if $Q_1=Q_2$. There are two equivalence classes, one consisting of $s_i$ for odd $i$, and the other of $s_i$ for even $i$. In this case we can see directly that these are the orbits of the action by the order $5$ cyclic subgroup $\dcW\leq\Aut_{M_2}(\cS)$ generated by the rotation.

However, we can instead follow \Cref{regequivlem}. Pick a point of $\cS$, say $s_1$, and consider the set $\norm{M_2}{[s_1]}$ of $g\in M_2$ with $s_1\cdot g\simeq s_1$. We find that $g=\mu_1\mu_2$ is such a group element, as $s_1\cdot\mu_1\mu_2=s_3\simeq s_1$. Then $g$ defines an element $w_g^{s_1}\in\Aut_{M_2}(\cS)$, with
\[w_g^{s_1}\cdot s_i=s_i\cdot g_igg_i^{-1}\]
where $g_i$ is any element of $M_2$ such that $s_i\cdot g_i=s_1$. For example, if we want to compute the action of $w_g^{s_1}$ on $s_6$, we can take $g_6=\mu_2\mu_1\mu_2\mu_1\mu_2$, and then
\[w_g^{s_1}\cdot s_6=s_6\cdot(\mu_2\mu_1\mu_2\mu_1\mu_2)(\mu_1\mu_2)(\mu_2\mu_1\mu_2\mu_1\mu_2)=s_6\cdot\mu_2\mu_1=s_8.\]
We could also take $g_6=\mu_1\mu_2\mu_1\mu_2\mu_1$, and compute
\[w_g^{s_1}\cdot s_6=s_6\cdot(\mu_1\mu_2\mu_1\mu_2\mu_1)(\mu_1\mu_2)(\mu_1\mu_2\mu_1\mu_2\mu_1)=s_6\cdot \mu_2\mu_1=s_8,\]
so the two choices give the same end result, as predicted. It can be checked that in this case $w_g^{s_1}$ is a rotation generating $\dcW$.
\end{eg}

Recall that if $\sim$ is homogeneous, the set $X/{\sim}$ of equivalence classes admits a natural $G$-action by $[x]\cdot g=[x\cdot g]$. Moreover, if $\varphi\in\norm{\Aut_G(X)}{W}$, then $[x]=[y]$ implies $[\varphi(x)]=[\varphi(y)]$, so we have an induced map $\widetilde{\varphi}\in\Aut_G(X/{\sim})$ given by $\widetilde{\varphi}([x])=[\varphi(x)]$.

\begin{prop}
\label{autXmodWnormal}
If $\sim$ is a regular equivalence relation $X$ and $W$ is the corresponding subgroup of $A=\Aut_G(X)$, then the map $\norm{A}{W}\to\Aut_G(X/{\sim})$ given by $\varphi \mapsto\widetilde{\varphi}$ is a homomorphism with kernel $W$. Thus there is an injection $\norm{A}{W}/W\incl\Aut_G(X/{\sim})$. In particular, if $\sim$ is normal, then there is an injection $A/W\incl\Aut_G(X/{\sim})$.
\end{prop}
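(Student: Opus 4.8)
The statement is a first-isomorphism-theorem argument, and the plan is to exhibit a group homomorphism
\[\Phi\colon N_A(W)\longrightarrow\Aut_G(X/{\sim}),\qquad \Phi(\varphi)=\widetilde\varphi,\]
with kernel exactly $W$; the injection $N_A(W)/W\hookrightarrow\Aut_G(X/{\sim})$ then follows formally, and the phrase ``kernel $W$'' in the statement refers to this underlying homomorphism out of $N_A(W)$. The well-definedness of $\widetilde\varphi$ for $\varphi\in N_A(W)$ is already supplied by the paragraph preceding the proposition: if $\varphi$ normalizes $W$ then $[x]=[y]$ forces $[\varphi(x)]=[\varphi(y)]$, so $\widetilde\varphi([x]):=[\varphi(x)]$ is a genuine self-map of $X/{\sim}$.

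First I would check that each $\widetilde\varphi$ lies in $\Aut_G(X/{\sim})$. Equivariance is immediate from $\varphi\in\Aut_G(X)$ and the definition of the induced $G$-action on $X/{\sim}$:
\[\widetilde\varphi([x]\cdot g)=[\varphi(x\cdot g)]=[\varphi(x)\cdot g]=\widetilde\varphi([x])\cdot g.\]
For bijectivity I would defer to the homomorphism property, established next, which gives $\widetilde\varphi\circ\widetilde{\varphi^{-1}}=\widetilde{\id}=\id$ (using that $\varphi^{-1}\in N_A(W)$ since the normalizer is a subgroup). Multiplicativity of $\Phi$ is a one-line verification,
\[\widetilde{\varphi_1\varphi_2}([x])=[\varphi_1(\varphi_2(x))]=\widetilde{\varphi_1}\bigl(\widetilde{\varphi_2}([x])\bigr),\]
so $\Phi(\varphi_1\varphi_2)=\Phi(\varphi_1)\circ\Phi(\varphi_2)$, and $\Phi$ is a homomorphism into $\Aut_G(X/{\sim})$.

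The substance of the argument is the kernel computation. An element $\varphi\in N_A(W)$ lies in $\ker\Phi$ exactly when $\widetilde\varphi=\id$, that is when $[\varphi(x)]=[x]$, equivalently $\varphi(x)\sim x$, for every $x\in X$. But the proof of \cref{regequivlem} characterizes $W$ as precisely the set of $w\in\Aut_G(X)$ satisfying $w\cdot x\sim x$ for all $x$; hence $\ker\Phi=W$ (and $W\leq N_A(W)$ automatically). The first isomorphism theorem now yields the injection $N_A(W)/W\hookrightarrow\Aut_G(X/{\sim})$. For the final clause, if $\sim$ is normal then \cref{regequivlem} gives that $W$ is normal in $A$, so $N_A(W)=A$ and the injection reads $A/W\hookrightarrow\Aut_G(X/{\sim})$.

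I do not expect a genuine obstacle here: the two facts that do the work---well-definedness of $\widetilde\varphi$ and the explicit description of $W$---are both handed to us by earlier results, and everything else is routine diagram-chasing. The only point requiring care is to keep straight that the homomorphism whose kernel is $W$ has domain $N_A(W)$, not the quotient $N_A(W)/W$, and to invoke the normalizer hypothesis precisely where well-definedness of $\widetilde\varphi$ needs it.
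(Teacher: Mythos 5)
Your proof is correct and follows essentially the same route as the paper: show $\varphi\mapsto\widetilde{\varphi}$ is a homomorphism $N_A(W)\to\Aut_G(X/{\sim})$, compute its kernel to be $W$, apply the first isomorphism theorem, and settle the normal case via the normality of $W$ from \cref{regequivlem}. The only divergence is the kernel step: you quote the explicit characterization $W=\{w\in\Aut_G(X):w\cdot x\sim x\text{ for all }x\}$ from the proof of \cref{regequivlem}, whereas the paper argues directly that if $\widetilde{\varphi}$ fixes even a single class $[x]$, then choosing $w\in W$ with $w(x)=\varphi(x)$ (transitivity of $W$ on $[x]$) and using $G$-equivariance plus transitivity of $G$ forces $\varphi=w$ --- a slightly stronger single-point criterion, but both justifications are valid.
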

\begin{proof}
First note that for $\varphi_1,\varphi_2\in\norm{A}{W}$, we have
\[\widetilde{\varphi_2\circ\varphi_1}([x])=[\varphi_2(\varphi_1(x))]=\widetilde{\varphi_2}\circ\widetilde{\varphi_1}([x])\]
for any $x\in X$, so $\varphi\mapsto\widetilde{\varphi}$ is a group homomorphism. If $w\in W$, then $[w(x)]=[x]$ for all $x$, so $\widetilde{w}$ is the identity. If $[\varphi(x)]=[x]$ for some $x$, then there exists $w\in W$ such that $w(x)=\varphi(x)$. Then
\[w(x\cdot g)=w(x)\cdot g=\varphi(x)\cdot g=\varphi(x\cdot g)\]
for all $g\in G$. As $G$ acts transitively on $X$, it follows that $\varphi=w$. Hence the kernel of the map $\varphi\mapsto\widetilde{\varphi}$ is exactly $W$, and so this map induces an injection $\norm{A}{W}/W\incl\Aut_G(X/{\sim})$. The statement when $\sim$ is normal then follows immediately from \Cref{regequivlem}, as $W$ is normal in this case.
\end{proof}
\begin{rem}
The map from \Cref{autXmodWnormal} is not an isomorphism in general. Indeed, let $X$ be the set of vertices of the square
\[\begin{tikzpicture}[scale=1.3]
\node at (0,0) (a) {$a$};
\node at (1,0) (b) {$b$};
\node at (1,-1) (c) {$c$};
\node at (0,-1) (d) {$d$};
\draw (a) -- (b) -- (c) -- (d) -- (a);
\end{tikzpicture}\]
and let $G=D_4$ act on the right. Then
\[\Stab_G(a)=\Stab_G(c)\ne\Stab_G(b)=\Stab_G(d),\]
so $A=\Aut_G(X)$ has order $2$ and is generated by a rotation by $\pi$. In the quotient $A\lquot X$, we have
\[\Stab_G(\{a,c\})=\Stab_G(\{b,d\})\]
so there is an automorphism of $A\lquot X$ that does not lift to an automorphism of $X$.
\end{rem}

\section{The orbit groupoid}
\label{orbitgroupoid}
We will mostly continue to work in the generality of \Cref{regequivrels}, so we have a homogeneous space $X$ for a group $G$, acting on the right, and a subgroup $W$ of $\Aut_G(X)$ acting on the left. However, we will not need the $G$-action at first, so we let $X$ be any set with a left $W$-action. We will form an orbit groupoid $W\lquot X$.

The objects of the groupoid $W\lquot X$ are the $W$-orbits; we write $[x]$ for the orbit of $x$ under $W$. The morphisms $\Hom_{W\lquot X}([x],[y])\eqdef\Iso_W([x],[y])$ are bijections between $W$-orbits that commute with the $W$-action. In other words, a morphism $\varphi\in\Iso_{W}([x],[y])$ makes the diagram
\[\begin{tikzcd}
{[x]}\arrow{r}{\varphi}\arrow{d}[swap]{w\cdot}&{[y]}\arrow{d}{w\cdot}\\
{[x]}\arrow{r}{\varphi}&{[y]}
\end{tikzcd}\]
commute for any $w\in W$. For example, in the case that $X$ carries a right action by $G$, and $W$ is a subgroup of $\Aut_G(X)$, any $g$ with $x\cdot g\in[y]$ induces such a bijection.

Composition of morphisms (read left-to-right) is given by $\varphi*\psi=\psi\circ\varphi$, which is in particular associative. Note that the identity on orbits commutes with the $W$-action, and the composition of two bijections commuting with the $W$-action also commutes with this action. Finally, if $\varphi\colon[x]\to[y]$ is a bijection commuting with the $W$-action, then $\varphi^{-1}\colon[y]\to[x]$ also commutes with the $W$-action. So every morphism is invertible, and the category $W\lquot X$ is indeed a groupoid.

The reason for the choice of composition law is that the morphisms of $W\lquot X$ commute with the left action of $W$, so should act on the right. This is similar to our earlier convention that the automorphisms $\Aut_G(X)$ commuting with a right $G$-action should act on the left. For consistency, composition denoted by $\circ$ is always read right-to-left, and composition denoted by $*$ is read left-to-right.

\begin{rem}
\label{othergroupoids}
Let $T(X)$ be the trivial groupoid on $X$, with exactly one morphism $f_{x,y}\colon x\to y$ for each $x,y\in X$. Then $W$ also acts on $T(X)$, with the action on morphisms given by $w\cdot f_{x,y}=f_{w\cdot x,w\cdot y}$. If $W$ acts freely on $X$, then the orbit groupoid $W\lquot X$ defined above is isomorphic to the orbit groupoid $T(X)\orbgpd W$ defined in \cite{browntopology}*{11.2.1}. The groupoid $T(X)\orbgpd W$ is in fact defined for any $W$-action on $X$, but when the $W$-action is free, it has the additional property of admitting a covering morphism (see \cite{browntopology}*{10.2}) from the simply connected groupoid $T(X)$. If the action is not free, then $W\lquot X$ need not agree with $T(X)\orbgpd W$.

When the $W$-action is free, the orbit groupoid $W\lquot X$ is also isomorphic to the groupoid defined in \cite{krammergarside}*{Defn.~2.4} from the data of a group acting freely on a set. The morphisms in this groupoid are formally given by orbits of the induced action of $W$ on $X\times X$, and as the action is free these orbits are the graphs of the morphisms in $W\lquot X$.
\end{rem}

Two orbits $[x]$ and $[y]$ in $W\lquot X$ are in the same connected component if and only if $\Stab_W(x)$ and $\Stab_W(y)$ are conjugate. In particular, if the $W$-action is free then $W\lquot X$ is connected.

Given any groupoid $\cG$ (with composition read left-to-right), we say that a collection of subgroups $H=\set{H_x}_{x\in\cG}$ of each point group $\cG(x):=\Hom_\cG(x,x)$ is a \emph{normal subgroup of $\cG$} if for every morphism $\alpha\colon x\to y$, we have $\alpha H_y=H_x\alpha$. Given such a collection, it follows that we may define a quotient groupoid $\cG/H$ with the same objects as $\cG$, and morphisms $(\cG/H)(x,y)=H_x\lquot\cG(x,y)=\cG(x,y)/H_y$. If $\Phi\colon\cG_1\to\cG_2$ is a functor, let
\[(\ker{\Phi})_x=\set{f\in\cG_1(x):\Phi(f)=1_{\Phi(x)}}.\]
Then $\ker{\Phi}=\set{(\ker{\Phi})_x}$ is a normal subgroup of $\cG_1$. Proofs of these statements can be found in \cite{browntopology}*{\S8.3}, as can the following lemma.
\begin{lem}[{\cite{browntopology}*{8.3.2}}]
\label{1it}
If a functor $\Phi\colon\cG_1\to\cG_2$ is injective on objects, then it induces an isomorphism of groupoids $\cG_1/\ker{\Phi}\isoto\im{\Phi}$.
\end{lem}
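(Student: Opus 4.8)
The plan is to construct the induced functor $\overline{\Phi}\colon\mathcal{G}_1/\ker\Phi\to\im\Phi$ explicitly and verify that it is an isomorphism, transcribing the first isomorphism theorem for groups into the groupoid setting. Since $\mathcal{G}_1/\ker\Phi$ has the same objects as $\mathcal{G}_1$, I would define $\overline{\Phi}(x)=\Phi(x)$ on objects and send the class of a morphism $f\in\mathcal{G}_1(x,y)$ to $\Phi(f)\in\im\Phi$. The first thing to check is that this is well defined on morphism classes. Recall $(\mathcal{G}_1/\ker\Phi)(x,y)=(\ker\Phi)_x\backslash\mathcal{G}_1(x,y)$, so if $f'$ represents the same class as $f$ then $f'=h*f$ for some $h\in(\ker\Phi)_x$, and functoriality of $\Phi$ gives $\Phi(f')=\Phi(h)*\Phi(f)=1_{\Phi(x)}*\Phi(f)=\Phi(f)$. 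Preservation of identities and of composition by $\overline{\Phi}$ is then inherited directly from the corresponding properties of $\Phi$, so $\overline{\Phi}$ is a functor.

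It remains to show $\overline{\Phi}$ is bijective on objects and on morphisms. On objects, surjectivity onto $\Ob(\im\Phi)=\Phi(\Ob\mathcal{G}_1)$ is immediate, and injectivity is exactly the hypothesis. Surjectivity on morphisms holds by the definition of the image subgroupoid. For injectivity on morphisms, suppose $\overline{\Phi}([f])=\overline{\Phi}([f'])$, so $\Phi(f)=\Phi(f')$. These equal morphisms of $\mathcal{G}_2$ have a common source $\Phi(x)$ and common target $\Phi(y)$; since $\Phi$ is injective on objects, the sources of $f,f'$ agree and so do their targets, so $f,f'\in\mathcal{G}_1(x,y)$ for a single pair $(x,y)$. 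Now the loop $h:=f*(f')^{-1}\in\mathcal{G}_1(x,x)$ is defined, and $\Phi(h)=\Phi(f)*\Phi(f')^{-1}=1_{\Phi(x)}$, so $h\in(\ker\Phi)_x$. Since $h*f'=f*(f')^{-1}*f'=f$, the morphisms $f$ and $f'$ lie in the same coset and $[f]=[f']$. Thus $\overline{\Phi}$ is a bijective functor, hence an isomorphism onto $\im\Phi$.

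I expect the injectivity on morphisms to be the only real obstacle, and the step where the hypothesis does all the work is the observation that $\Phi(f)=\Phi(f')$ forces $f$ and $f'$ to share source and target. Without injectivity on objects this can fail, and then the loop $f*(f')^{-1}$ — the device that realises the difference of $f$ and $f'$ as a kernel element in a single vertex group — would not be defined, so there would be no way to quotient $f$ and $f'$ together inside $\mathcal{G}_1$. Everything else is routine, relying only on the normality of $\ker\Phi$ recorded before the statement, which is what makes $\mathcal{G}_1/\ker\Phi$ a groupoid in the first place.
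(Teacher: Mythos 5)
Your proof is correct; the paper itself offers no argument for this lemma, simply citing Brown \cite[8.3.2]{browntopology}, and your direct verification is exactly the standard first-isomorphism-theorem argument that the citation points to. You also correctly isolate where the hypothesis is used — injectivity on objects forces $\Phi(f)=\Phi(f')$ to have matching sources and targets, so the loop $f*(f')^{-1}$ exists in a single vertex group and exhibits $f,f'$ in the same coset — and your computation $f=h*f'$ with $h\in(\ker\Phi)_x$ is consistent with the paper's left-to-right composition and its convention $(\mathcal{G}_1/\ker\Phi)(x,y)=(\ker\Phi)_x\backslash\mathcal{G}_1(x,y)$.
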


From now on, we assume that $X$ is a homogeneous space for a right $G$-action, and $W\leq\Aut_G(X)$, so that $W$ acts freely. Thus by \Cref{othergroupoids} we can think of the trivial groupoid $T(X)$ as a universal cover for the groupoid $W\lquot X\isom T(X)\orbgpd W$. In this more specific situation, we can identify the morphisms of $W\lquot X$ with certain classes of elements of $G$, as we now explain.

Let $\normgpd{W}{G}$ be the groupoid with objects given by the $W$-orbits $[x]$ in $X$, and morphisms
\[\normgpd{W}{G}([x],[y])=\set{g\in G:x\cdot g\in[y]},\]
with composition $g*h=gh$. Note that the point groups are the groups $\norm{G}{[x]}$ discussed in \Cref{regequivrels}. Now for each $g\in G$, let $\act{g}{}\colon X\to X$ be the function $\act{g}{}(x)=x\cdot g$, and let $\act{g}{x}=\rest{\act{g}{}}{[x]}\colon[x]\to[x\cdot g]$. Each $\act{g}{x}$ is a bijection of orbits commuting with the $W$-action, and we have $\act{g}{x}*\act{h}{x\cdot g}=\act{gh}{x}$, so we may define $\Phi\colon\normgpd{W}{G}{}\to W\lquot X$ to be the functor given by the identity on objects, and by $\Phi(g)=\act{g}{x}$ on morphisms.
\begin{prop}
\label{Gbijprop}
Let $X$ be a homogeneous space for a right $G$-action, and let $W\leq\Aut_G(X)$. Then every bijection $\varphi\colon[x]\to[y]$ of $W$-orbits commuting with the action of $W$ has the form $\act{g}{x}$ for some $g\in G$ with $x\cdot g\in[y]$. Thus $\Phi$ is a surjective functor.
\end{prop}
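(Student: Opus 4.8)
The plan is to produce, for any bijection $\varphi\colon[x]\to[y]$ of $W$--orbits commuting with the $W$--action, a single element $g\in G$ with $\varphi=\varphi_g^x$. Since $\Phi$ is the identity on objects by construction, realising every such $\varphi$ as some $\varphi_g^x=\Phi(g)$ establishes surjectivity on morphisms, and hence that $\Phi$ is a surjective functor.

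First I would use transitivity of the $G$--action to choose $g\in G$ with $\varphi(x)=x\cdot g$. Because $\varphi$ carries $[x]$ into $[y]$, we have $x\cdot g=\varphi(x)\in[y]$, so $g\in N_G^W([x],[y])$; in particular $\varphi$ and $\varphi_g^x$ already agree at the single point $x$.

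The heart of the argument is then to promote this agreement at $x$ to agreement on all of $[x]$. Every element of $[x]$ has the form $w\cdot x$ for some $w\in W$, so for arbitrary $w\in W$ I would compute
\[\varphi(w\cdot x)=w\cdot\varphi(x)=w\cdot(x\cdot g)=(w\cdot x)\cdot g=\varphi_g^x(w\cdot x),\]
where the first equality is the hypothesis that $\varphi$ commutes with the left $W$--action, and the third is the commutation of the left $W$--action with the right $G$--action, which is exactly the defining property of $W\leq\Aut_G(X)$. Hence $\varphi=\varphi_g^x=\Phi(g)$, as required.

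I do not expect a serious obstacle here: the only step requiring care is the displayed computation, where one must verify that a group element matching $\varphi$ at a single point automatically matches it throughout the orbit. This is precisely where the commutation of the two actions is used, together with the fact that $W$ acts (freely and) transitively on each orbit $[x]$. It is worth noting that $g$ is determined only modulo $\Stab_G(x)$, which is consistent with the identification of the morphism sets of $N_G^W$ with the subquotients of \cref{Wissubquotient}.
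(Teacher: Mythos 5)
Your proposal is correct and follows essentially the same argument as the paper: choose $g$ by transitivity of the $G$--action so that $\varphi(x)=x\cdot g$, then extend agreement from the point $x$ to all of $[x]=W\cdot x$ via the computation $\varphi(w\cdot x)=w\cdot\varphi(x)=(w\cdot x)\cdot g=\varphi_g^x(w\cdot x)$, which uses exactly the commutation of the two actions. The only additions (that $g$ is well-defined modulo $\Stab_G(x)$, and the explicit remark that transitivity of $W$ on its own orbit is what lets a single point of agreement propagate) are harmless elaborations of the same proof.
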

\begin{proof}
Let $\varphi\colon[x]\to[y]$ commute with the $W$-action. As $G$ acts transitively, there exists $g\in G$ with $x\cdot g=\varphi(x)$. Note that all elements of $[x]$ have the form $w\cdot x$ for some $w\in W$. Then
\[\varphi(w\cdot x)=w\cdot\varphi(x)=w\cdot x\cdot g=\act{g}{x}(w\cdot x),\]
so $\varphi=\act{g}{x}$.
\end{proof}

For each object $[x]$ of $\normgpd{W}{G}{}$, write $(\Stab_G)_{[x]}=\Stab_G(x)\leq\normgpd{W}{G}([x],[x])$. Denote the collection $\set{(\Stab_G)_{[x]}}$ of these subgroups by $\Stab_G$.

\begin{cor}
\label{Gorbitgroupoid}
Let $X$ be a homogeneous space for a right $G$-action, and let $W\leq\Aut_G(X)$. Then $\Stab_G$ is a normal subgroup of $\normgpd{G}{W}$, and the orbit groupoid $W\lquot X$ is isomorphic to the quotient groupoid $\normgpd{G}{W}/\Stab_G$.
\end{cor}
\begin{proof}
The functor $\Phi$ is a bijection on objects, so it follows from \Cref{Gbijprop} and \Cref{1it} that $W\lquot X\isom \normgpd{G}{W}/\ker{\Phi}$. The function $\act{g}{x}$ is the identity if and only if $g\in\Stab_G(x)$, and so $\ker{\Phi}=\Stab_G$.
\end{proof}

\section{The cluster modular groupoid}
\label{clustmodgpd}
Recall from \Cref{labseedgraph} that we have maps $\maut{g}{Q}\in\Aut(\cK)$ for each quiver $Q$ and $g\in M_n$.

\begin{defn}
For $\cS$ a mutation class of labelled seeds, the \emph{cluster modular groupoid} $\CMG(\cS)$ is the groupoid with objects given by the quivers occurring in seeds of $\cS$, and morphisms $\Hom(Q_1,Q_2)$ given by formal symbols $\morph{g}{Q_1}$ for $g\in M_n$ such that $Q_1\cdot g=Q_2$, satisfying the relations
\[\morph{g}{Q_1}=\morph{h}{Q_1}\iff\maut{g}{Q_1}=\maut{h}{Q_1}.\]
The (left-to-right) composition is $\morph{g}{Q}*\morph{h}{Q\cdot g}=\morph{gh}{Q}$,
which is well-defined because $\maut{g}{Q}\circ\maut{h}{Q\cdot g}=\maut{gh}{Q}$.
\end{defn}
Our groupoid $\CMG(\cS)$ is a full subgroupoid of the cluster modular groupoid defined by Fock--Goncharov \cite{fockcluster1};
the only difference is that we have a fixed vertex set for the quivers, rather than allowing all possible $n$ element sets.
As Fock--Goncharov's groupoid is connected, ours is equivalent to it.
Note that $\CMG(\cS)$ does not depend on the field $\cF$, but only on the mutation class of quivers underlying the mutation class $\cS$ of seeds.
 
We now consider two particular instances of the orbit groupoid construction outlined in \Cref{orbitgroupoid}. 
Let $\cS$ be a cluster structure;
for each $x\in\cS$ we write $x=(Q_x,\beta_x)$. 
By \Cref{regequivlem}, there exist subgroups $\cW$ and $\dcW$ of $\Aut_{M_n}(\cS)$ corresponding to the two equivalence relations from \Cref{regequivdefs}, so that $\cW$-orbits are precisely similarity classes, i.e.\ equivalence classes under $\approx$,
and $\dcW$-orbits are equivalence classes under $\simeq$. The symbols $\cW$ and $\dcW$ will denote these specific groups for the remainder of the paper.

We may think of the group $\Aut{\cK}$ as a groupoid with the single object $\cK$. There is a functor $\normgpd{\dcW}{G}\to\Aut{\cK}$ mapping every $\dcW$-orbit to $\cK$, and each morphism $g\colon[x]\to[y]$ to $\maut{g}{Q_x}$. This functor is well-defined, as $Q_x=Q_{w\cdot x}$ for all $w\in \dcW$, and because $\maut{g}{Q_x}\circ\maut{h}{Q_y}=\maut{gh}{Q_x}$ for any $g\colon[x]\to[y]$ and $h\colon[y]\to[z]$.

Note that if $Q_1\approx Q_2$, i.e.\ $Q_1$ and $Q_2$ differ by reversing the orientation of some components, then $\maut{g}{Q_1}=\maut{g}{Q_2}$ for any $g\in M_n$. Therefore the functor $\normgpd{\cW}{G}\to\Aut{\cK}$ mapping every $\cW$-orbit to $\cK$ and every $g\colon[x]\to[y]$ to $\maut{g}{Q_x}$ is well-defined, as $Q_x\approx Q_{w\cdot x}$ for all $w\in \cW$.

Because $\maut{g}{Q_x}=\id{\cK}$ for $g\in\Stab_G(x)$, these functors define left actions of the groupoids $\normgpd{\cW}{G}/\Stab_G$ and $\normgpd{\dcW}{G}/\Stab_G$ on the field $\cK$. Each morphism $g\colon[x]\to[x\cdot g]$ acts as the change of coordinates of $\cF$ from $\beta_x$ to $\beta_{x\cdot g}$. In other words, the diagram

\[\begin{tikzcd}[column sep=large,row sep=large]
\cK\arrow{r}{\beta_x}&\cF\\
\cK\arrow{u}{\maut{g}{Q_x}}\arrow{ur}[swap]{\beta_{x\cdot g}}
\end{tikzcd}\]
commutes for all $x\in\cS$ and $g\in M_n$.

\begin{thm}
\label{cmgthm}
The cluster modular groupoid $\CMG(\cS)$ and the orbit groupoid $\dcW\lquot\cS$ are isomorphic.
\end{thm}
\begin{proof}
We will deduce the result by constructing a surjective functor $\Psi\colon\normgpd{\dcW}{G}\to\CMG(\cS)$, bijective on objects, and observing that $\ker{\Psi}=\ker{\Phi}$ for $\Phi\colon\normgpd{\dcW}{G}\to \dcW\lquot\cS$ the functor from \Cref{orbitgroupoid}, thus obtaining isomorphisms
\[\CMG(\cS)\isom\normgpd{\dcW}{G}/\ker{\Psi}=\normgpd{\dcW}{G}/\ker{\Phi}\isom \dcW\lquot\cS,\]
with the last isomorphism coming from \Cref{Gorbitgroupoid}. We define $\Psi$ on objects by taking each $\dcW$-orbit $[x]$ to its common quiver $Q_x$, and on morphisms by $\Psi(g)=\morph{g}{Q_x}$, for $g\in \normgpd{\dcW}{G}([x],[y])$. 
This is well-defined as for $g\colon[x]\to[y]$ and $h\colon[y]\to[z]$, we have
\[\Psi(g*h)=\Psi(gh)=\morph{gh}{Q_x}=\morph{g}{Q_x}*\morph{h}{Q_y}=\Psi(g)*\Psi(h).\]
By the definition of $\CMG(\cS)$, this functor is bijective on objects. It is surjective on morphisms as if $\morph{g}{Q_1}\colon Q_1\to Q_2$ is a morphism in $\CMG(\cS)$, then there exists $x\in\cS$ with $Q_x=Q_1$, and $Q_{x\cdot g}=Q_2$.
Thus $g\in\normgpd{\dcW}{G}([x],[x\cdot g])$ and $\morph{g}{Q_1}=\Psi(g)$.

It remains to show that $(\ker{\Psi})_{[x]}=(\ker{\Phi})_{[x]}=\Stab_G(x)$. Let $g\in\normgpd{\dcW}{G}([x])$, and note that $\Psi(g)=\morph{g}{Q_x}=\morph{1}{Q_x}=1_{Q_x}$ if and only if $\maut{g}{Q_x}=\maut{1}{Q_x}=\id{\cK}$. Now if $g\in\Stab_G(x)$, then $\maut{g}{Q_x}=\beta_x^{-1}\circ\beta_x=1_{\cK}$, so $\Psi(g)=\id{Q_x}$. Conversely, if $\Psi(g)=\id{Q_x}$ then $\beta_{x\cdot g}=\beta_x$, and $Q_{x\cdot g}=Q_x$ because $g\in\normgpd{\dcW}{G}([x])$, so $g\in\Stab_G(x)$.

We conclude that $\ker{\Psi}=\ker{\Phi}$, and thus obtain the required isomorphism.
\end{proof}

\section{The cluster automorphism group}
\label{clustautgp}
Let $\cS$ be a cluster structure on $\cF$.
We define a left action of $\cW\leq\Aut_{M_n}(\cS)$ on $\cF$ via the map $\alpha\colon\cW\to\Aut(\cF)$ given by
\[\alpha\colon w\mapsto\waut{w}=\beta_{w\cdot x}\circ\beta_x^{-1}\]
for $x\in\cS$ any labelled seed. If $y$ is another labelled seed, then there exists $g\in M_n$ with $x\cdot g=y$, so $\beta_y=\beta_{x\cdot g}=\beta_x\circ\maut{g}{Q_x}$. It follows that
\begin{align*}
\beta_{w\cdot y}\circ\beta_y^{-1}&=\beta_{w\cdot x\cdot g}\circ(\maut{g}{Q_x})^{-1}\circ\beta_x^{-1}\\
&=\beta_{w\cdot x}\circ\maut{g}{Q_{w\cdot x}}\circ(\maut{g}{Q_x})^{-1}\circ\beta_x^{-1}\\
&=\beta_{w\cdot x}\circ\beta_x^{-1},
\end{align*}
as $Q_{w\cdot x}\approx Q_x$, so $\maut{g}{Q_x}=\maut{g}{Q_{w\cdot x}}$. Thus the definition of $\waut{w}$ is independent of the choice of seed $x$. As $\dcW$ is a subgroup of $\cW$, this action restricts to an action of $\dcW$ on $\cF$. We write $\alpha^+\defeq\rest{\alpha}{\dcW}$.

To see that we have defined a left action, let $v,w\in\cW$, and let $x\in\cS$. We can write $\waut{w}=\beta_{w\cdot x}\circ\beta_x^{-1}$, and $\waut{v}=\beta_{v\cdot(w\cdot x)}\circ\beta_{w\cdot x}^{-1}$. So
\[\waut{v}\circ\waut{w}=\beta_{v\cdot(w\cdot x)}\circ\beta_{w\cdot x}^{-1}\circ\beta_{w\cdot x}\circ\beta_x^{-1}=\beta_{vw\cdot x}\circ\beta_x^{-1}=\waut{vw}.\]

As the action of $M_n$ is transitive, for any $x\in\cS$ and $w\in\cW$ there exists $g\in M_n$ such that $x\cdot g=w\cdot x$. Then the diagram
\[\begin{tikzcd}[column sep=5em,row sep=5em]
\cK\arrow{r}{\beta_x}&\cF\\
\cK\arrow{u}{\maut{g}{Q_x}}\arrow{ur}[swap]{\beta_{w\cdot x}}\arrow{r}[swap]{\beta_x}&\cF\arrow{u}[swap]{\waut{w}}
\end{tikzcd}\]
commutes. The map $\maut{g}{Q_x}$ on the left is the action of $g\Stab_G(x)\in\Aut_{\normgpd{\cW}{G}/\Stab_G}([x])$ on $\cK$, and the map $\waut{w}$ on the right is the action of $w$ on $\cF$. Thus each isomorphism $\beta_x$ intertwines the action of the groupoid $\normgpd{\cW}{G}/\Stab_G$ on $\cK$ with the action of the group $\cW$ on $\cF$, and similarly for $\normgpd{\dcW}{G}/\Stab_G$ and $\dcW$. We can use the isomorphisms of \Cref{Gorbitgroupoid} and \Cref{cmgthm} to make equivalent statements for the groupoids $\cW\lquot\cS$, $\dcW\lquot\cS$ and $\CMG(\cS)$.

These actions of $\cW$ and $\dcW$ on $\cF$ restrict to $\QQ$-algebra automorphisms of the cluster algebra $\cA$, as they are automorphisms of $\cF$ that permute the set of cluster variables. 
For any $w\in\cW$ and any seed $x$, the action of $w$ on $\cF$ sends the labelled cluster corresponding to $x$ to that corresponding to $w\cdot x$, and commutes with the action of each $\mu_i$ as $x$ and $w\cdot x$ have similar quivers. 
Thus this action is a cluster automorphism in the sense of \cite{assemcluster}*{Defn.~1}. If $w\in\dcW\leq\cW$, then the corresponding cluster automorphism will be direct, as it sends clusters to clusters with the same quiver. 
This observation provides maps $\alpha\colon\cW\to\cAut{\cA}$ and $\alpha^+\colon\dcW\to\dircAut{\cA}$, recalling that $\cAut{\cA}$ is the group of cluster automorphisms, and $\dircAut{\cA}$ is the subgroup of direct cluster automorphisms.

\begin{thm}
\label{clustautthm}
The maps $\alpha\colon\cW\to\cAut{\cA}$ and $\alpha^+\colon\dcW\to\dircAut{\cA}$ are group isomorphisms.
\end{thm}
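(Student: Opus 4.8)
The plan is to show $\alpha$ and $\alpha^+$ are bijective; the discussion preceding the theorem already establishes that they are well-defined group homomorphisms landing in $\Aut\mathcal{A}$ and $\Aut^+\mathcal{A}$. For injectivity, suppose $\alpha_w=1_{\mathbb{F}}$ for some $w\in W$ and fix $x\in\mathcal{S}$. Then $\beta_{w\cdot x}=\alpha_w\circ\beta_x=\beta_x$, so $w\cdot x$ and $x$ have the same labelled cluster; since labelled clusters determine labelled seeds (the corollary of \cite[Thm.~3]{gekhtmanproperties} noted in \cref{labseedgraph}), we get $w\cdot x=x$. Any $w\in W\leq\Aut_{M_n}(\mathcal{S})$ fixing a point acts as the identity, because the $M_n$--action is transitive; hence $w=1$ and $\alpha$ is injective. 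As $\alpha^+=\alpha|_{W^+}$, it is injective too.

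For surjectivity, I would construct from a cluster automorphism $f\in\Aut\mathcal{A}$ an element $\phi_f\in W$ with $\alpha_{\phi_f}=f$. First extend $f$ to the field automorphism of $\mathbb{F}$ (the fraction field of $\mathcal{A}$) that it induces. For each $z=(Q_z,\beta_z)\in\mathcal{S}$ the entries of $\beta_z$ form a cluster, and since $f$ sends clusters to clusters the tuple $(f((\beta_z)_1),\dotsc,f((\beta_z)_n))$ is again a labelled cluster; by \cite[Thm.~3]{gekhtmanproperties} it underlies a unique labelled seed, which I take to be $\phi_f(z)$, so that $\beta_{\phi_f(z)}=f\circ\beta_z$. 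I would then check that $\phi_f$ commutes with the $M_n$--action: commutation with permutations is immediate, since applying $f$ entrywise commutes with reindexing; and commutation with each $\mu_i$ follows from the compatibility of $f$ with mutation in the sense of \cite{assemcluster}, applying $f$ to the exchange relation for $z$ at $i$ and using that the binomial $\prod_k x_k^{a_{ki}}+\prod_k x_k^{a_{ik}}$ is invariant under the swap $a_{ki}\leftrightarrow a_{ik}$, i.e.\ under taking opposites. Commutation with mutation in particular shows $\phi_f(z)$ is mutation equivalent to $\phi_f(x)\in\mathcal{S}$, so $\phi_f$ maps $\mathcal{S}$ to itself and lies in $\Aut_{M_n}(\mathcal{S})$.

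The crux is to show $\phi_f\in W$, equivalently (by \cref{regequivlem}) that $Q_{\phi_f(z)}\approx Q_z$ for every $z$. Writing $A$ and $B$ for the adjacency matrices of $Q_z$ and $Q_{\phi_f(z)}$, the identity $\phi_f(z)\cdot\mu_i=\phi_f(z\cdot\mu_i)$ forces the exchange binomial at $i$ computed from $B$ to coincide with the image under $f$ of the exchange binomial at $i$ computed from $A$. Since the cluster variables $f((\beta_z)_k)$ are algebraically independent, equal binomials must have equal monomials, so for each $i$ the unordered pair of columns $\{(a_{ki})_k,(a_{ik})_k\}$ equals $\{(b_{ki})_k,(b_{ik})_k\}$; that is, at each vertex either the incident arrows are preserved or all reversed. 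The absence of $2$--cycles forces adjacent vertices to make the same choice, so the reversal set is a union of connected components and $B\approx A$. Hence $\phi_f(z)\approx z$ and $\phi_f\in W$; when $f$ is direct no reversals occur, so $Q_{\phi_f(z)}=Q_z$ and $\phi_f\in W^+$. Finally $\alpha_{\phi_f}=\beta_{\phi_f(x)}\circ\beta_x^{-1}=(f\circ\beta_x)\circ\beta_x^{-1}=f$, proving surjectivity of $\alpha$ and, by restriction to direct automorphisms, of $\alpha^+$.

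I expect the main obstacle to be this last paragraph: passing from the purely algebraic compatibility of $f$ with exchange relations to the combinatorial conclusion that $f$ reorients whole connected components rather than arrows at individual vertices. Everything else is bookkeeping built on the two cited inputs — that labelled clusters determine labelled seeds, and that cluster automorphisms commute with mutation — together with the binomial symmetry that underlies the relation $\approx$ throughout the paper.
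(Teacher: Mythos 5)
Your proposal is correct, and the injectivity half is the same argument as the paper's (faithfulness of the $W$--action on $\mathbb{F}$ via \cite[Thm.~3]{gekhtmanproperties}), but your surjectivity argument takes a genuinely different route. The paper works at a \emph{single} seed: given $f$, it cites (a small modification of) \cite[Lem.~2.3]{assemcluster} to conclude that the seed $(Q',\gamma)$ carrying the cluster $f(\beta)$ has quiver similar to $Q^\sigma$, and then uses the fact that $W$ acts transitively on each similarity class (built into \cref{regequivlem}) to produce $w\in W$ with $w\cdot(Q^\sigma,\beta^\sigma)=(Q',\gamma)$; agreement of $\alpha_w$ and $f$ on one transcendence basis then gives $f=\alpha_w$. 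You instead construct the automorphism $\phi_f$ of $\mathcal{S}$ globally, using \cite[Thm.~3]{gekhtmanproperties} to define it seed by seed, and verify by hand that it preserves similarity classes via the exchange-binomial and algebraic-independence argument; what you gain is self-containedness, since that ``crux'' paragraph is essentially a proof of the disconnected-quiver generalization of \cite[Lem.~2.3]{assemcluster} --- exactly the ``small modification'' the paper waves at --- while what you pay is having to check equivariance and similarity at every seed rather than at one, with the transitivity of $W$ on similarity classes doing no work for you. One caveat you should repair: your Step 2 (commutation of $\phi_f$ with each $\mu_i$) invokes compatibility of $f$ with mutation at \emph{arbitrary} seeds, which in \cite{assemcluster} is itself deduced from their Lemma 2.3, so as written your crux paragraph partly re-derives what Step 2 already assumed. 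To make the argument honestly independent of that lemma, interleave Steps 2 and 3 as an induction over the exchange graph: compatibility at the base seed holds by definition of cluster automorphism, and compatibility plus similarity at $z$ imply both at each neighbour $z\cdot\mu_i$, since similar quivers have identical exchange binomials. This is a presentational reordering, not a mathematical gap.
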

\begin{proof}
The groups $\cW$ and $\dcW$ both act faithfully on $\cF$, else there are two labelled seeds with the same labelled cluster, contradicting \cite{gekhtmanproperties}*{Thm.~4}. Thus the maps $\alpha$ and $\alpha^+$ are injective.

Let $(Q,\beta)\in\cS$, and let $\alpha$ be a cluster automorphism, so there exists $(Q',\gamma)\in\cS$ such that $\alpha(\beta_{\sigma(i)})=\gamma_i$ for some permutation $\sigma\colon I\to I$. By a small modification to \cite{assemcluster}*{Lem.~2.3}, to allow for disconnected quivers, the quiver $Q^\sigma$ lies in the similarity class of $Q'$. We have $\sigma\in\Sym{n}\leq M_n$, so the seed $(Q^\sigma,\beta^\sigma)$ is mutation equivalent to $(Q,\beta)$.

As $Q^\sigma\approx Q'$, there exists $w\in\cW$ such that $w\cdot(Q^\sigma,\beta^\sigma)=(Q',\gamma)$, and so $\waut{w}=\gamma\circ(\beta^\sigma)^{-1}$. For any $i$ we have
\[\waut{w}(\beta_{\sigma(i)})=\waut{w}(\beta^\sigma_i)=\gamma_i,\]
so the action of $w$ agrees with that of $\alpha$ on each $\beta_i$, and the $\beta_i$ form a free generating set of $\cF$, so $\alpha=\waut{w}$.

If $f$ is taken to be a direct cluster automorphism, then $Q^\sigma=Q'$, and so we can take $w\in \dcW$.
\end{proof}

The conclusion of \Cref{clustautthm} would not hold if $M_n$ was replaced by the free group on $n$ involutions. As noted above, the maps $\waut{w}$ take clusters to clusters and commute with mutations, so they are cluster automorphisms, but they also satisfy the stronger property of taking labelled clusters to labelled clusters. This means that if $(\beta_1,\dotsc,\beta_n)$ is some labelled cluster of $\cA$, then $(f(\beta_1),\dotsc,f(\beta_n))$ is a labelled cluster of $\cA$. A priori, cluster automorphisms need only take labelled clusters to permutations of labelled clusters, but the presence of permutations in $M_n$ means that any permutation of a labelled cluster from a mutation class is also a labelled cluster from that mutation class, so in fact the stronger property also holds.

The isomorphism classes of the groups $\cAut{\cA}$ and $\dircAut{\cA}$ for all Dynkin and affine types are shown in \cite{assemcluster}*{Table~1}, and so this table provides isomorphism classes for the groups $\cW$ and $\dcW$ for these types.

\begin{cor}
\label{clustautisCMG}
The group $\dircAut{\cA}$ of direct cluster automorphisms of a cluster algebra $\cA$ is isomorphic to each point group $\Aut_{\CMG(\cS)}(Q)$ in the cluster modular groupoid $\CMG(\cS)$, for $\cS$ the set of labelled seeds of $\cA$.
\end{cor}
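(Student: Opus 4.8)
The plan is to assemble the corollary from three results already in hand: the groupoid isomorphism of \cref{cmgthm}, the subquotient description of \cref{Wissubquotient} (together with \cref{Gorbitgroupoid}), and the group isomorphism of \cref{clustautthm}. Fix a quiver $Q$ occurring in a seed of $\mathcal{S}$, and let $[x]$ be the $W^+$--orbit consisting of the labelled seeds with quiver $Q$. Since the $W^+$--orbits are exactly the maximal sets of seeds sharing a quiver, this orbit is well defined, and under the isomorphism $\CMG(\mathcal{S})\cong W^+\backslash\mathcal{S}$ built in the proof of \cref{cmgthm} (via $\Psi([x])=Q_x$ and $\Phi$ the identity on objects) the object $Q$ corresponds precisely to $[x]$.

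First I would observe that an isomorphism of groupoids restricts to isomorphisms between the point groups at corresponding objects, so \cref{cmgthm} immediately yields
\[\Aut_{\CMG(\mathcal{S})}(Q)\cong\Aut_{W^+\backslash\mathcal{S}}([x]).\]
Next I would identify this point group with $W^+$ itself. By \cref{Gorbitgroupoid} we have $W^+\backslash\mathcal{S}\cong N_G^{W^+}/\Stab_G$, whose point group at $[x]$ is $N_G([x])/\Stab_G(x)$, and \cref{Wissubquotient} identifies this subquotient with $W^+$. Equivalently, since $W^+$ acts freely the orbit $[x]$ is a $W^+$--torsor, and the group of $W^+$--equivariant self-bijections of a torsor is isomorphic to the group. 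Finally, \cref{clustautthm} gives $W^+\cong\Aut^+\mathcal{A}$. Chaining these three isomorphisms produces $\Aut_{\CMG(\mathcal{S})}(Q)\cong\Aut^+\mathcal{A}$, and the argument is uniform in $Q$, so it applies to every point group; this is consistent with $W^+\backslash\mathcal{S}$ being connected, as the $W^+$--action is free.

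The only genuinely delicate point is the middle identification of $\Aut_{W^+\backslash\mathcal{S}}([x])$ with $W^+$: the outer two steps are direct citations. In the torsor formulation the care needed is purely in tracking conventions, namely checking that the $*$--composition of the orbit groupoid (defined by $\varphi*\psi=\psi\circ\varphi$) matches the multiplication of $W^+$ rather than its opposite, and that the resulting isomorphism depends on a choice of base point in $[x]$, exactly as in \cref{Wissubquotient}. Since a base-point choice in each orbit gives the explicit identification of $w\in W^+$ with the unique $g\in N_G([x])/\Stab_G(x)$ satisfying $w\cdot y=y\cdot g$, there is no substantial new computation to perform beyond confirming these bookkeeping matters.
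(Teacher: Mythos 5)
Your proposal is correct and follows essentially the same route as the paper: it chains the groupoid isomorphism of \cref{cmgthm}, the identification of point groups via \cref{Gorbitgroupoid} and \cref{Wissubquotient}, and the isomorphism $W^+\cong\Aut^+\mathcal{A}$ of \cref{clustautthm}, exactly as the paper's proof does. The extra remarks on torsors, base-point dependence and composition conventions are sound but just make explicit what the cited results already provide.
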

\begin{proof}
By \Cref{cmgthm}, $\CMG(\cS)$ is isomorphic to $\dcW\lquot\cS$, so if $x$ is any labelled seed of $\cA$ with quiver $Q$, then
\[\Aut_{\CMG(\cS)}(Q)\isom\Aut_{\dcW\lquot\cS}([x])\isom\norm{M_n}{[x]}/\Stab_{M_n}(x)\isom \dcW\isom\dircAut{\cA},\]
with the final three isomorphisms provided by \Cref{Gorbitgroupoid}, \Cref{Wissubquotient} and \Cref{clustautthm} respectively.
\end{proof}

Another way to obtain an isomorphism $\dircAut{\cA}\isom\Aut_{\CMG(\cS)}(Q)$ is as follows. Let \[B_Q=\set{\beta_x:x\in\cS,Q_x=Q}.\]
Then $\Aut_{\CMG(\cS)}(Q)$ acts freely and transitively on the right of $B_Q$ by
\[\beta_x\cdot\morph{g}{Q}=\beta_x\circ\maut{g}{Q}=\beta_{x\cdot g}\]
and $\dircAut{\cA}$ acts freely and transitively on the left of $B_Q$ by $\alpha\cdot\beta_x=\alpha\circ\beta_x$. Therefore each choice of seed $x$ with quiver $Q$ provides an isomorphism $\dircAut{\cA}\isom\Aut_{\CMG(\cS)}(Q)$ in the same manner as in the proof of \Cref{Wissubquotient}. Furthermore, this isomorphism agrees with that of \Cref{clustautisCMG} provided the same seed $x$ is chosen to obtain the isomorphism $\norm{M_n}{[x]}/\Stab_{M_n}(x)\isom \dcW$ of \Cref{Wissubquotient}.

\section{Quivers determined by stabilisers}
\label{stabsdeterminequivs}
\begin{defn}
A mutation class $\cS$ is \emph{small} if only finitely many quivers occur among its labelled seeds.
\end{defn}
Thus a mutation class is small if and only if one of its seeds has a quiver of finite mutation type, or equivalently if all of them do. We prefer the term `small' to `finite mutation type' when referring to the class of seeds rather than to one of its quivers.

As we have a classification of quivers of finite mutation type, we also have a classification of small mutation classes. 
Precisely, they are classes in which either the quivers have two vertices, or no quiver has an arrow of multiplicity more than $2$; see \cite{derksennewgraphs}*{Cor.~8}. 
All finite mutation classes are small, as are all mutation classes arising from tagged triangulations of marked bordered surfaces, as described in \cite{fomintriangulated1}. 
Mutation classes in which the total multiplicity of arrows in the quiver is constant across all seeds, classified in \cite{ladkaniwhich}, are necessarily also small.

\begin{thm}
\label{mainthm}
Let $\cS$ be a small mutation class and let $(Q_1,\beta_1),(Q_2,\beta_2)\in \cS$ be labelled seeds. 
Then $\Stab_{M_n}(Q_1,\beta_1)=\Stab_{M_n}(Q_2,\beta_2)$ if and only if $Q_1$ and $Q_2$ are similar.
\end{thm}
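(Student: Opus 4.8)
The plan is to prove the two implications separately, with the forward direction being immediate and the converse carrying all the content.

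For the ``if'' direction, suppose $Q_1 \approx Q_2$. Then $(Q_1,\beta_1) \approx (Q_2,\beta_2)$ in the sense of \cref{regequiveg}, and since that relation was shown there to be regular, equality of stabilizers is exactly the content of \cref{regequiv}. This needs no smallness hypothesis. For the ``only if'' direction I would first reformulate group-theoretically. Since $M_n$ acts transitively, write $(Q_2,\beta_2) = (Q_1,\beta_1)\cdot g$; then $\Stab_{M_n}(Q_1,\beta_1) = \Stab_{M_n}(Q_2,\beta_2)$ says precisely that $g$ normalises $S := \Stab_{M_n}(Q_1,\beta_1)$, so the construction of \cref{regequivlem} produces an automorphism $\varphi = w^{(Q_1,\beta_1)}_g \in \Aut_{M_n}(\mathcal{S})$ with $\varphi(Q_1,\beta_1) = (Q_2,\beta_2)$. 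As the relation ``same stabilizer'' is the largest regular relation and hence corresponds to all of $\Aut_{M_n}(\mathcal S)$, the whole statement is equivalent to $W = \Aut_{M_n}(\mathcal{S})$: I must show that every $\varphi \in \Aut_{M_n}(\mathcal{S})$ satisfies $Q_{\varphi(x)} \approx Q_x$ for all $x$. Here the key structural fact is that $\varphi$ commutes with the $M_n$-action, so it is an edge-label-preserving automorphism of $\Delta(\mathcal S)$ and preserves every relation on $\mathcal S$ phrased purely through the $M_n$-action.

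I would then reconstruct the similarity class $[Q_x]$ from this combinatorial data in stages. First, the underlying graph of $Q_x$ is detected by the criterion that $i,j$ are non-adjacent if and only if $x\cdot\mu_i\mu_j = x\cdot\mu_j\mu_i$: when the exchange-matrix entry $b_{ij}$ is zero the two mutations genuinely commute as operations on seeds, leaving the entire $j$-th row and $\beta_j$ untouched, \emph{independently of the rest of the quiver}, whereas when $b_{ij}\neq 0$ mutating at $i$ alters the exchange relation at $j$ and the two seeds differ. This criterion is $\varphi$-invariant, so $Q_{\varphi(x)}$ and $Q_x$ share an underlying graph at every seed, and since $\varphi$ commutes with all of $M_n$, the quivers $Q_x$ and $Q_x\cdot g = Q_{\varphi(x)}$ have identical underlying graphs after applying any element of $M_n$. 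It then remains to recover the arrow multiplicities and the orientations up to component reversal. For this I would use that the similarity class $[Q]$ is determined by the tuple $(\alpha^Q_{\mu_i})_{i\in I}$: each $\alpha^Q_{\mu_i}$ records the unordered pair of monomials in the exchange relation at $i$, hence the multiplicities $|b_{ki}|$ together with a two-sided partition of the neighbours of $i$, well-defined up to swapping sides (note $\alpha^Q_{\mu_i} = \alpha^{Q^{\op}}_{\mu_i}$, consistent with similarity); propagating these partitions across a connected component determines all orientations up to one global reversal per component, which is exactly similarity. The task is therefore to show this $\alpha$-data agrees for $Q_x$ and $Q_{\varphi(x)}$, extracting it from the finitely many underlying-graph patterns produced along mutation sequences, via the finite-mutation-type classification (so that multiplicities are at most $2$ when $n\geq 3$; the case $n=2$ is trivial, as every rank-two quiver is similar to its opposite).

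The main obstacle is exactly this last step: the coupling between the rank-two dynamics at $\{i,j\}$ and the remaining vertices means multiplicities and orientations are \emph{not} visible through any naive local relation. Indeed the braid relation $\mu_i\mu_j\mu_i = \mu_j\mu_i\mu_j$, valid for an isolated single arrow, already fails inside the type $\mathsf A_3$ path $1\to 2\to 3$ (direct computation gives the two sides the non-isomorphic quivers $1\leftarrow 2\leftarrow 3$ and $2\to 1\leftarrow 3$), and the period of $\mu_i\mu_j$ depends on the entire quiver rather than on $b_{ij}$ alone. What rescues the argument is smallness: finite mutation type simultaneously bounds the multiplicities and constrains the local configurations tightly enough that the purely unoriented mutation data recorded by $\varphi$ forces the $\alpha$-data, and hence the similarity class. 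That the hypothesis is genuinely needed is witnessed by the examples of \cref{orbitgpegs}, where infinitely many quivers occur and distinct similarity classes share a stabilizer.
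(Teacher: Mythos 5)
Your ``if'' direction matches the paper (regularity of $\approx$ from \cref{regequiveg}), and your reformulation of the converse as $W=\Aut_{M_n}(\mathcal{S})$, followed by reconstructing first the underlying graph and then orientations up to componentwise reversal, is exactly the paper's decomposition into its claims (a) and (b). Your commutation criterion for adjacency ($x\cdot\mu_i\mu_j=x\cdot\mu_j\mu_i$ if and only if $i,j$ are non-adjacent) is the content of \cref{edgecountlem}. But the proposal has a genuine gap, and you have located it yourself: the entire weight of the theorem rests on showing that the stabilizer also determines arrow \emph{multiplicities} and the \emph{relative orientation} of adjacent arrows, and your text never does this. The closing assertion that ``smallness\dots constrains the local configurations tightly enough that the purely unoriented mutation data recorded by $\varphi$ forces the $\alpha$-data'' is a restatement of the theorem, not an argument. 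Worse, the $\alpha$-data $(\alpha^{Q}_{\mu_i})_{i\in I}$ is defined directly from the quiver, so invoking it presupposes what is to be proved; one must instead exhibit specific group elements whose membership in $\Stab_{M_n}(Q,\beta)$ reads off this data. Even your adjacency step asserts, for $b_{ij}\neq 0$, that the two seeds differ ``independently of the rest of the quiver'' without justification, and this is precisely the kind of persistence statement that fails naively, as your own braid-relation example shows (in fact $\mu_i\mu_j\mu_i=\mu_j\mu_i\mu_j$ already fails for an isolated single arrow: chase the decagon of \cref{a2eg}).

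The missing idea, which the paper supplies, is the principal/trivial coefficients trick. Freeze all vertices outside the local configuration. Setting the frozen variables to $1$ gives a projection of seeds commuting with mutations at the remaining vertices, so if the local seed with \emph{trivial} coefficients is not fixed by a word $g$ in $\mu_i,\mu_j(,\mu_k)$, then neither is any ambient seed; dually, by \cite[Thm.~3.7]{fomincluster4} together with \cite[Thm.~3]{gekhtmanproperties}, if the local configuration with \emph{principal} coefficients is fixed by $g$, then so is every ambient seed containing it. This is exactly the decoupling mechanism whose absence you flag as ``the main obstacle''. With it, finitely many explicit checks complete the proof: $(\mu_i\mu_j)^2\in\Stab$ detects non-adjacency; $(\mu_i\mu_j)^5\in\Stab$ but $(\mu_i\mu_j)^2\notin\Stab$ detects a multiplicity-$1$ arrow; no power of $\mu_i\mu_j$ stabilizes when the multiplicity is at least $2$ (\cref{edgecountlem}); and $(\mu_i\mu_j\mu_k)^6$, respectively $(\mu_i\mu_k\mu_i\mu_k\mu_i\mu_j)^2$, detects whether two adjacent multiplicity-$1$ arrows form a directed path (\cref{edgeorientlem}), with the Derksen--Owen classification of $3$-vertex quivers of finite mutation type disposing of the multiplicity-$2$ case (\cref{reducetomult1}). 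Without these verifications, or some substitute of equal strength, your outline does not constitute a proof of the ``only if'' direction.
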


\begin{proof}
As explained in \Cref{regequivdefs}, the equivalence relation $\approx$ is regular, so labelled seeds with similar quivers have the same stabiliser under the $M_n$-action. It remains to prove the converse.

First assume that $Q_1$ and $Q_2$ have two vertices. In this case, there is nothing to prove; no mutation of $Q_1$ can alter the multiplicity of the arrow between its two vertices, so $Q_2$ has an arrow of the same multiplicity, and $Q_1$ and $Q_2$ are similar.

From now on, we assume that $Q_1$ and $Q_2$ have more than two vertices, and that $\Stab_{M_n}(Q_1,\beta_1)=\Stab_{M_n}(Q_2,\beta_2)$. We make the following claims, the proofs of which are deferred to \Cref{edgecountlem,edgeorientlem}:
\begin{itemize}
\item[(a)]if $(Q,\beta)$ is a labelled seed in a small mutation class, and $Q$ has at least $3$ vertices, then the underlying weighted graph of $Q$ is determined by $\Stab_{M_n}(Q,\beta)$,
\item[(b)]if $(Q,\beta)$ is a labelled seed in a small mutation class, and $Q$ has at least $3$ vertices, then the relative orientation of any pair of adjacent arrows in $Q$, i.e.\ whether or not they form a directed path, is determined by $\Stab_{M_n}(Q,\beta)$.
\end{itemize}

Now by (a), any two vertices $i$ and $j$ have an arrow of the same multiplicity between them in both $Q_1$ and $Q_2$. Thus we can treat $Q_1$ and $Q_2$ as being two orientations of the same weighted graph $\Gamma$.

Let $e$ be an edge of $\Gamma$. If $f$ is any edge adjacent to $e$, then by (b) its orientation relative to $e$ is determined by the stabiliser, and so this relative orientation is the same in $Q_1$ and $Q_2$. It follows that $f$ has the same orientation in $Q_1$ and $Q_2$ if and only if $e$ does. The same now applies to any edge adjacent to $f$, and so on, and we deduce that the components of $e$ in $Q_1$ and $Q_2$ are either the same or opposite.

Thus $Q_1$ and $Q_2$ can only differ by taking the opposite of some collection of connected components, and so $Q_1$ and $Q_2$ are similar, as required.
\end{proof}

We will see later in \Cref{evilquiver} that the result of \Cref{mainthm} may fail for mutation classes that are not small.

\begin{cor}
\label{autdeterminesquiv}
If $\cS$ is a small mutation class, then $\cW=\Aut_{M_n}(\cS)$, and so the $\Aut_{M_n}(\cS)$-orbits in $\cS$ are precisely the sets of labelled seeds with similar quivers.
\end{cor}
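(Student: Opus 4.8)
The plan is to read this off \cref{mainthm} through the Galois correspondence of \cref{regequivlem}, with essentially no new computation. Recall that $W$ is, by definition, the subgroup of $\Aut_{M_n}(\mathcal{S})$ attached to the regular relation $\approx$ of having similar quivers, so that the $W$-orbits are exactly the maximal collections of labelled seeds with similar quivers. The whole task is to show $W$ is \emph{all} of $A:=\Aut_{M_n}(\mathcal{S})$. The route I would take is to identify $\approx$ with the maximal regular relation $\sim$ on $\mathcal{S}$, namely $x\sim y\iff\Stab_{M_n}(x)=\Stab_{M_n}(y)$, and to show that this maximal relation is precisely the one attached to the full group $A$; since the correspondence of \cref{regequivlem} assigns to each regular relation a single well-defined subgroup (the map $\sim\mapsto\{w\in A:w\cdot x\sim x\text{ for all }x\}$), equal relations must be assigned equal subgroups, forcing $W=A$.

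First I would pin down the subgroup corresponding to $\sim$. Any $\varphi\in A$ commutes with the $M_n$-action, so $\varphi(x)\cdot g=\varphi(x\cdot g)$; hence $\varphi(x)\cdot g=\varphi(x)$ if and only if $x\cdot g=x$, giving $\Stab_{M_n}(\varphi(x))=\Stab_{M_n}(x)$ for every $x\in\mathcal{S}$. In other words $\varphi(x)\sim x$ for all $x$, so every automorphism lies in the subgroup attached to $\sim$, and that subgroup is therefore the whole of $A$. (The converse observation, that equal stabilizers place two seeds in one $A$-orbit, follows from the explicit automorphisms $w^x_g$ built in the proof of \cref{regequivlem}, and confirms that the $A$-orbits are exactly the $\sim$-classes, but only the direction just given is actually needed for the equality $W=A$.)

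Finally I would invoke \cref{mainthm}, whose statement is exactly that on a small mutation class the relations $\approx$ and $\sim$ coincide: $\Stab_{M_n}(Q_1,\beta_1)=\Stab_{M_n}(Q_2,\beta_2)$ if and only if $Q_1$ and $Q_2$ are similar. Since $\approx$ is attached to $W$ and $\sim$ is attached to $A$, and the correspondence sends equal relations to equal subgroups, we conclude $W=A=\Aut_{M_n}(\mathcal{S})$. The orbit statement is then immediate, as the $W$-orbits are by construction the sets of labelled seeds with similar quivers. Essentially all the difficulty is hidden inside \cref{mainthm}; the only genuinely delicate point in the present argument is the second step, confirming that $\sim$ corresponds to the \emph{entire} automorphism group rather than a proper subgroup, which rests on the elementary but crucial fact that an automorphism commuting with a transitive group action automatically preserves point stabilizers.
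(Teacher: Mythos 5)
Your proposal is correct and follows essentially the same route as the paper: invoke \cref{mainthm} to identify $\approx$ with the equal-stabilizer relation, then conclude $W=\Aut_{M_n}(\mathcal{S})$ via the correspondence of \cref{regequivlem}, since $\approx$ corresponds to $W$ and the equal-stabilizer relation corresponds to the full automorphism group. The only difference is that you explicitly verify the latter fact (that any automorphism commuting with a transitive action preserves stabilizers, so the subgroup attached to the equal-stabilizer relation is all of $\Aut_{M_n}(\mathcal{S})$), a point the paper asserts without proof; this is a worthwhile clarification rather than a deviation.
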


\begin{proof}
By \Cref{mainthm}, two quivers have the same stabiliser if and only if their quivers are similar. Thus the equivalence relation $\approx$, corresponding to the subgroup $\cW$ of $\Aut_{M_n}(\cS)$, is the same as the relation of having the same stabiliser, which corresponds to the entire automorphism group. Thus $\cW=\Aut_{M_n}(\cS)$. So $\Aut_{M_n}(\cS)$-orbits are $\cW$-orbits, which are the sets of labelled seeds with similar quivers.
\end{proof}

Combining \Cref{autdeterminesquiv} with \Cref{clustautthm}, we immediately obtain the following result.

\begin{cor}
If $\cS$ is a small mutation class, then $\Aut_{M_n}(\cS)\isom\cAut{\cA}$, an explicit isomorphism being given by the action of $\Aut_{M_n}(\cS)$ on $\cF$ as described in \Cref{clustautgp}.
\end{cor}

For Dynkin and affine types, which are all small, the isomorphism classes of $\cAut{\cA}$, and hence of $\Aut_{M_n}(\cS)$, can be read off from \cite{assemcluster}*{Table~1}.

An interesting question, to which we do not know the answer, is the following.

\begin{prob}
Is the converse to \Cref{autdeterminesquiv} is also true? In other words, does the property $\cW=\Aut_{M_n}(\cS)$ characterise small mutation classes?
\end{prob}

\begin{cor}
\label{smallnormal}
If $\cS$ is a small mutation class of labelled seeds with connected quivers, then the equivalence relations $\simeq$ and $\approx$ are normal.
\end{cor}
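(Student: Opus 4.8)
The plan is to handle the two relations separately, in each case converting normality of the relation into normality of the corresponding subgroup of $A:=\Aut_{M_n}(\mathcal{S})$ by means of the final clause of \cref{regequivlem}. For $\approx$ this is immediate: by \cref{autdeterminesquiv} we have $W=A$, so the subgroup attached to $\approx$ is the whole automorphism group, which is trivially normal in itself, and \cref{regequivlem} then gives that $\approx$ is normal. (One can also see this directly from \cref{normalequiv}: any $\varphi\in A$ commutes with the $M_n$-action, so $\Stab_{M_n}(\varphi(x))=\Stab_{M_n}(x)$ for every $x$, and by \cref{mainthm} equality of stabilizers is the same as similarity of quivers, whence $\varphi(x)\approx\varphi(y)$ whenever $x\approx y$.)

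The substance is the relation $\simeq$, attached to $W^+$. By \cref{regequivlem} normality of $\simeq$ is equivalent to $W^+\trianglelefteq A=W$, and by \cref{normalequiv} this amounts to showing that every $\varphi\in A$ preserves the relation of having an \emph{identical} quiver, i.e.\ that $Q_x=Q_y$ forces $Q_{\varphi(x)}=Q_{\varphi(y)}$. I would first record two reductions. Since $x\simeq y$ forces $x\approx y$, the stabilizer argument above already yields $Q_{\varphi(x)}\approx Q_{\varphi(y)}$, so the two quivers share an underlying weighted graph (claim (a) of \cref{mainthm}) and the same relative orientation of every adjacent pair of arrows (claim (b)); propagating the latter through each connected component shows that $Q_{\varphi(x)}$ and $Q_{\varphi(y)}$ can differ only by reversing a set of whole components. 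The entire content of the statement is therefore that \emph{no} component is reversed.

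To control the absolute orientations I would reformulate via a single group element and seek a sign invariant. Choosing $g\in M_n$ with $y=x\cdot g$, the hypothesis $Q_x=Q_y$ says precisely that $g$ fixes the labelled quiver $Q_x$, and equivariance gives $Q_{\varphi(y)}=Q_{\varphi(x)}\cdot g$; the goal is then that $g$ also fixes the similar quiver $Q_{\varphi(x)}$. When the quivers are \emph{connected} this is clean: the direct/inverse dichotomy of \cite{assemcluster} gives a homomorphism $A\to\mathbb{Z}/2\mathbb{Z}$ recording whether an automorphism preserves or reverses the single orientation class, its kernel is exactly $W^+$, and a subgroup realised as a kernel is automatically normal.

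The step I expect to be the main obstacle is this final one in the \emph{reducible} case. For a disconnected quiver a similarity may reverse each component independently, so the naive sign takes values in $(\mathbb{Z}/2\mathbb{Z})^{\pi_0}$ rather than in $\mathbb{Z}/2\mathbb{Z}$; worse, an element $g$ fixing a labelled quiver can still permute isomorphic components (a transposition exchanging two identically oriented components already does this), and such a $g$ can transport a reversal from one component to another. The crux is thus to show that the per-component orientation type carried by $\varphi$ is genuinely well defined and preserved, and not merely well defined modulo the component-permuting part of the stabilizer; this is where I would expect the real work to lie, requiring either careful bookkeeping across the components of $\Gamma$ together with the action of stabilizer elements on them, or an additional rigidity input, and it is the point at which a blanket connectedness assumption would most obviously simplify matters.
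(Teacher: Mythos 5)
Your treatment of $\approx$ coincides with the paper's: \cref{autdeterminesquiv} gives $W=\Aut_{M_n}(\mathcal{S})$, which is trivially normal in itself, and \cref{regequivlem} converts this into normality of the relation. For $\simeq$, your ``connected case'' argument is in substance the paper's \emph{entire} proof: the paper combines \cref{clustautthm} with \cite[Thm.~2.11]{assemcluster} to conclude that $W^+$ has index $1$ or $2$ in $W=\Aut_{M_n}(\mathcal{S})$, hence is normal; your homomorphism $\Aut_{M_n}(\mathcal{S})\to\mathbb{Z}/2\mathbb{Z}$ with kernel $W^+$ is the same argument in different words. So, measured against the paper, what your proposal lacks is exactly the disconnected case, which you explicitly leave open; as written it is therefore incomplete.

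However, the obstacle you isolate is not a defect of your bookkeeping --- it is fatal, and it defeats the paper's one-line proof as well. The index bound of \cite[Thm.~2.11]{assemcluster} rests on the direct/inverse dichotomy, which (as the paper itself concedes when proving \cref{clustautthm}) weakens to mere \emph{similarity} for disconnected quivers: there exist cluster automorphisms reversing some components while fixing others. Concretely, let $Q$ have components $1\to 2$ and $3\to 4$, both of type $\mathsf{A}_2$; then $\mathcal{S}$ is finite, hence small, and $\mathcal{A}\cong\mathcal{A}_1\otimes\mathcal{A}_2$ with both factors of type $\mathsf{A}_2$. Let $f$ be the cluster automorphism acting as an inverse (reflection) automorphism of $\mathcal{A}_1$, say $x_1\mapsto x_1$, $x_2\mapsto(1+x_1)/x_2$, and as the identity on $\mathcal{A}_2$; by \cref{clustautthm} we have $f=\alpha_\varphi$ for some $\varphi\in W\leq\Aut_{M_n}(\mathcal{S})$, and $\varphi$ acts on seeds by applying $f$ entrywise to labelled clusters. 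Take $x$ the initial seed and $y=x\cdot(1\;3)(2\;4)$, so that $Q_x=Q_y=Q$ and $x\simeq y$, but the $\mathcal{A}_1$-variables occupy positions $3,4$ of $y$. Then $\varphi(x)$ has quiver $(2\to1)\sqcup(3\to4)$ while $\varphi(y)$ has quiver $(1\to2)\sqcup(4\to3)$, so $\varphi(x)\not\simeq\varphi(y)$ and $\simeq$ is not normal by \cref{normalequiv}. Equivalently, the direct automorphism exchanging the two tensor factors lies in $W^+$, but its conjugate by $\varphi$ reverses both components and does not; one checks $[W:W^+]=4$ here, not $\leq 2$. This is precisely the ``transported reversal between permuted isomorphic components'' you flagged. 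The upshot: your argument and the paper's are both complete exactly when the quivers of $\mathcal{S}$ are connected (the standing setting of \cite{assemcluster}), and the corollary as stated requires that hypothesis; your instinct that the reducible case is where the real work lies was, if anything, an understatement, since no argument can close that gap.
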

\begin{proof}
Recall from \Cref{regequivlem} that $\simeq$ and $\approx$ are normal if and only if the corresponding groups $\cW$ and $\dcW$ are normal in $\Aut_{M_n}(\cS)$. If $\cS$ is small, then $\cW=\Aut_{M_n}(\cS)$, so is normal. As the quivers of labelled seeds of $\cS$ are connected, we may use \cite{assemcluster}*{Thm.~2.11} in conjunction with \Cref{clustautthm} to see that $\dcW$ has index $1$ or $2$ in $\cW=\Aut_{M_n}(\cS)$, so is also normal.
\end{proof}

It remains to prove the claims (a) and (b) made in the proof of \Cref{mainthm}. To achieve this, we will consider cluster algebras from ice quivers, in order to employ a similar `principal coefficient trick' to that used in the proof of \cite{cerullilinear}*{Cor.~5.3}. An ice quiver is a quiver $Q$ with a partition of its vertices into \emph{mutable vertices} and \emph{frozen vertices}. To remain consistent with earlier notation, our ice quivers will have vertex set $I=\set{1,\dotsc,n}$, partitioned into a set $J$ of mutable vertices, and a set $F$ of frozen vertices. We may define labelled seeds $(Q,\beta)$ with $Q$ an ice quiver, and $\beta=(\beta_1,\dotsc,\beta_n)\in\cF^n$ as before. We will only allow mutations at mutable vertices, and permutations of the labels of the mutable vertices. Thus the mutation class of a labelled seed $(Q,\beta)$, where $Q$ is an ice quiver, is the orbit $(Q,\beta)\cdot M_J$, where $M_J\leq M_n$ is the subgroup generated by $\mu_j$ for $j\in J$, and $\sigma\in\Sym{n}$ such that $\sigma$ fixes $F$ pointwise. Consequently, if $(Q',\beta')$ is any labelled seed in this mutation class, we have $\beta'_k=\beta_k$ for $k\in F$, and
\[\beta'_i\in\QQ(\beta_j:j\in J)[\beta_k:k\in F]\subseteq\cF.\]

Let $Q$ be an ice quiver. Given a labelled seed $(Q,\beta)$, we define $(Q_\circ,\beta_\circ)$ to be the corresponding labelled seed with \emph{trivial coefficients}, so $Q_\circ$ is the full subquiver on the mutable vertices of $Q$ and $\beta_\circ=(\beta_j)_{j\in J}$. If $g\in M_J$ satisfies $(Q,\beta)\cdot g=(Q',\beta')$, then $(Q_\circ,\beta_\circ)\cdot g=(Q'_\circ,\beta'_\circ)$, as $Q'_\circ$ is the full subquiver on the mutable vertices of $Q'$, and $(\beta'_\circ)_j$ is the image of $\beta'_j$ under the projection $\QQ(\beta_j:j\in J)[\beta_k:k\in F]\to\QQ(\beta_j:j\in J)$ defined by
\[\beta_i\mapsto\begin{cases}\beta_i,&i\in J\\1,&i\in F\end{cases}\]
We will use the contrapositive of this result; if $(Q_\circ,\beta_\circ)\cdot g\ne(Q_\circ,\beta_\circ)$, then $(Q,\beta)\cdot g\ne(Q,\beta)$.

Similarly, given $(Q,\beta)$, we define $(Q_\bullet,\beta_\bullet)$ to be the corresponding labelled seed with \emph{principal coefficients}. The quiver $Q_\bullet$ has vertex set $J\sqcup J'$, where $J'=\set{j':j\in J}$ is a clone of the set $J$. The full subquiver of $Q_\bullet$ on $J$ agrees with that of $Q$ on $J$, i.e.\ $Q_\circ$, and there are additional arrows $j'\to j$ for all $j\in J$. Take $\beta_\bullet=(\beta_j,\beta_{j'})_{j\in J}$, where the $\beta_{j'}$ are formal symbols. Then \cite{fomincluster4}*{Thm.~3.7} explains how, for any $g\in M_J$, the labelled cluster of $(Q,\beta)\cdot g$ is determined by that of $(Q_\bullet,\beta_\bullet)\cdot g$. As labelled clusters determine labelled seeds by \cite{gekhtmanproperties}*{Thm.~4}, it follows that the entire seed $(Q,\beta)\cdot g$ is determined by $(Q_\bullet,\beta_\bullet)\cdot g$. In particular, if $(Q_\bullet,\beta_\bullet)\cdot g=(Q_\bullet,\beta_\bullet)$, then $(Q,\beta)\cdot g=(Q,\beta)$. 

\begin{lem}\label{edgecountlem}
Let $i$ and $j$ be two distinct vertices in a quiver $Q$ from a labelled seed $(Q,\beta)\in\cS$. Then:
\begin{itemize}
\item[(i)]if there are no arrows between $i$ and $j$, then $(\mu_i\mu_j)^2\in\Stab_{M_n}(Q,\beta)$,
\item[(ii)]if there is an arrow of multiplicity $1$ between $i$ and $j$, then $(\mu_i\mu_j)^5\in\Stab_{M_n}(Q,\beta)$ and $(\mu_i\mu_j)^2\notin\Stab_{M_n}(Q,\beta)$, and
\item[(iii)]if there is an arrow of multiplicity $2$ or more between $i$ and $j$, then $(\mu_i\mu_j)^N\notin\Stab_{M_n}(Q,\beta)$ for any $N$.
\end{itemize}
Thus, in the case that $\cS$ is small and $Q$ has at least three vertices, so there are no arrows of multiplicity more than $2$, we can determine the multiplicity of the arrow between $i$ and $j$ from the stabiliser of the labelled seed $(Q,\beta)$, proving claim (a).
\end{lem}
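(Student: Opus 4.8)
The plan is to reduce every assertion to a statement about a rank-$2$ cluster algebra on the two vertices $i$ and $j$, by freezing all other vertices, and then to invoke the classical classification of rank-$2$ cluster patterns. The crucial translation is that an arrow of multiplicity $m$ between $i$ and $j$ contributes exchange-matrix entries $\pm m$, so the relevant rank-$2$ product is $bc=m^2$: thus $m=0$ gives type $\mathsf{A}_1\times\mathsf{A}_1$, $m=1$ gives type $\mathsf{A}_2$, and $m\geq 2$ gives $bc\geq 4$, which is of infinite type. The two coefficient tricks set up before the lemma supply the two logical directions. To show that an element lies in $\Stab_{M_n}(Q,\beta)$ I would pass to principal coefficients, since a seed that is fixed with principal coefficients is fixed under any specialization; to show an element does \emph{not} lie in the stabilizer I would pass to trivial coefficients and use the contrapositive, since projecting a fixed seed to trivial coefficients keeps it fixed.

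For the negative assertions (the second half of (ii) and all of (iii)) I would freeze $I\setminus\{i,j\}$ and consider the trivial-coefficient seed $(Q_\circ,\beta_\circ)$, a genuine rank-$2$ seed with cluster $(\beta_i,\beta_j)$ and a single weight-$m$ arrow. In the $\mathsf{A}_2$ case ($m=1$) the iterated mutation $\mu_i\mu_j$ cycles through the pentagon of \cref{a2eg}, so $(\mu_i\mu_j)^2$ moves $(Q_\circ,\beta_\circ)$; when $bc=m^2\geq 4$ the rank-$2$ pattern is of infinite type, so the cluster variables produced by $\mu_i\mu_j,(\mu_i\mu_j)^2,\dotsc$ are pairwise distinct and no power fixes $(Q_\circ,\beta_\circ)$. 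In both cases the contrapositive of the trivial-coefficient reduction immediately yields $(\mu_i\mu_j)^2\notin\Stab_{M_n}(Q,\beta)$, respectively $(\mu_i\mu_j)^N\notin\Stab_{M_n}(Q,\beta)$ for every $N$.

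For the positive assertions (part (i) and the first half of (ii)) I would instead pass to the principal-coefficient seed $(Q_\bullet,\beta_\bullet)$ on the same two mutable vertices and verify that the relevant power of $\mu_i\mu_j$ fixes it; the principal-coefficient reduction then upgrades this to fixing $(Q,\beta)$. When $m=0$ the quiver $Q_\bullet$ carries only the two coefficient arrows $i'\to i$ and $j'\to j$, and a direct computation shows $\mu_i$ and $\mu_j$ act as commuting involutions, so $(\mu_i\mu_j)^2$ fixes $(Q_\bullet,\beta_\bullet)$. When $m=1$ the algebra is of type $\mathsf{A}_2$, whose exchange graph is a pentagon independent of the coefficients, so $\mu_i\mu_j$ has order $5$ and $(\mu_i\mu_j)^5$ fixes $(Q_\bullet,\beta_\bullet)$. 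Combining the three cases, an arrow of multiplicity $0$, $1$ or (in the small case, at most) $2$ is detected by whether $(\mu_i\mu_j)^2$, or only $(\mu_i\mu_j)^5$, or no power of $\mu_i\mu_j$ lies in the stabilizer; this determines the multiplicity, and hence the weighted graph, giving claim (a).

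The main obstacle I anticipate is not the rank-$2$ periodicity itself, which is classical, but making the two reductions airtight: one must check that after freezing the remaining vertices the element $(\mu_i\mu_j)^N$ really acts through the subgroup $M_{\{i,j\}}$, so that the trivial- and principal-coefficient statements established before the lemma apply verbatim, and that the recovered data includes the full quiver, via the fact that labelled clusters determine labelled seeds, rather than merely the cluster. The squaring $bc=m^2$ is precisely what makes multiplicity $2$ land in infinite type rather than in the finite type $\mathsf{B}_2$, since a skew-symmetric quiver forces $b=c$; getting this bookkeeping right is the only genuinely delicate point.
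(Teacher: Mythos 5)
Your proposal is correct and follows essentially the same route as the paper's proof: reduce to the rank-2 subquiver on $i,j$ by freezing the remaining vertices, use the trivial-coefficient projection (in contrapositive form) plus the rank-2 classification for the negative assertions, and use the principal-coefficient lift, via separation of additions and the fact that labelled clusters determine labelled seeds, for the positive ones. The only point of divergence is the type $\mathsf{A}_2$ periodicity with principal coefficients: the paper checks that $(\mu_i\mu_j)^5$ fixes $(Q_\bullet,\beta_\bullet)$ by direct (computer-assisted) computation, whereas you cite coefficient-independence of the finite-type exchange graph, which is legitimate but still requires the small labelling argument you allude to at the end (after ten mutations the mutable arrow has returned to its original orientation, so the labelled cluster cannot come back swapped, and labelled clusters determine labelled seeds) to upgrade the unlabelled pentagon to the labelled-seed statement.
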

\begin{proof}
As all sequences of mutations under consideration only involve mutating at the vertices $i$ and $j$, we may assume that $Q$ is an ice quiver, with $J=\set{i,j}$ and $F=I\setminus J$. We may also assume, by taking opposites if necessary, that any arrow between $i$ and $j$ is oriented towards $j$.

We first consider $(Q_\circ,\beta_\circ)$. If $Q_\circ$ is the quiver
\[\begin{tikzpicture}[scale=1.5]
\node at (0,0) (i) {$i$};
\node at (1,0) (j) {$j$};
\path[-angle 90]
	(i) edge (j);
\end{tikzpicture}\]
of type $\typA2$, then our calculation from \Cref{a2eg} shows that $(Q_\circ,\beta_\circ)$ is not fixed by $(\mu_i\mu_j)^2$, and hence neither is $(Q,\beta)$. Similarly, if $Q_\circ$ is
\[\begin{tikzpicture}[scale=1.5]
\node at (0,0) (i) {$i$};
\node at (1,0) (j) {$j$};
\path[-angle 90,font=\scriptsize]
	(i) edge node [above] {$k$} (j);
\end{tikzpicture}\]
for $k\geq2$, then the seed $(Q_\circ,\beta_\circ)$ has infinite mutation class, by \cite{fomincluster2}*{Thm.~1.4}. If it were fixed by $(\mu_i\mu_j)^N$, the mutation class would have size at most $2N$. Thus $(\mu_i\mu_j)^N$ does not fix $(Q_\circ,\beta_\circ)$ or $(Q,\beta)$.

It remains to show that $(\mu_i\mu_j)^2$ and $(\mu_i\mu_j)^5$ do fix the appropriate labelled seeds. If there are no arrows between $i$ and $j$ in $Q$, then $Q_\bullet$ is
\[\begin{tikzpicture}[scale=1.5]
\node at (0,0) (i') {$\boxed{i'}$};
\node at (1,0) (j') {$\boxed{j'}$};
\node at (0,-1) (i) {$i$};
\node at (1,-1) (j) {$j$};
\path[-angle 90]
	(i') edge (i)
	(j') edge (j);
\end{tikzpicture}\]
where the boxed vertices are frozen. It can be verified that any labelled seed with this quiver is fixed by $(\mu_i\mu_j)^2$, and thus so is $(Q,\beta)$. This is equivalent to showing that any such labelled seed has the same image under $\mu_i\mu_j$ and $\mu_j\mu_i$, which we may check directly; we have
\begin{align*}
\left(\mathord{\begin{tikzpicture}[baseline=0]
\node at (0,0.65) (i') {$\boxed{i'}$};
\node at (1,0.65) (j') {$\boxed{j'}$};
\node at (0,-0.65) (i) {$i$};
\node at (1,-0.65) (j) {$j$};
\path[-angle 90]
	(i') edge (i)
	(j') edge (j);
\end{tikzpicture}},(\beta_i,\beta_j,\beta_{i'},\beta_{j'})\right)\cdot\mu_i\mu_j&=\left(\mathord{\begin{tikzpicture}[baseline=0]
\node at (0,0.65) (i') {$\boxed{i'}$};
\node at (1,0.65) (j') {$\boxed{j'}$};
\node at (0,-0.65) (i) {$i$};
\node at (1,-0.65) (j) {$j$};
\path[-angle 90]
	(i) edge (i')
	(j') edge (j);
\end{tikzpicture}},\left(\frac{\beta_{i'}}{\beta_i},\beta_j,\beta_{i'},\beta_{j'}\right)\right)\cdot\mu_j\\
&=\left(\mathord{\begin{tikzpicture}[baseline=0]
\node at (0,0.65) (i') {$\boxed{i'}$};
\node at (1,0.65) (j') {$\boxed{j'}$};
\node at (0,-0.65) (i) {$i$};
\node at (1,-0.65) (j) {$j$};
\path[-angle 90]
	(i) edge (i')
	(j) edge (j');
\end{tikzpicture}},\left(\frac{\beta_{i'}}{\beta_i},\frac{\beta_{j'}}{\beta_j},\beta_{i'},\beta_{j'}\right)\right)
\end{align*}
and
\begin{align*}
\left(\mathord{\begin{tikzpicture}[baseline=0]
\node at (0,0.65) (i') {$\boxed{i'}$};
\node at (1,0.65) (j') {$\boxed{j'}$};
\node at (0,-0.65) (i) {$i$};
\node at (1,-0.65) (j) {$j$};
\path[-angle 90]
	(i') edge (i)
	(j') edge (j);
\end{tikzpicture}},(\beta_i,\beta_j,\beta_{i'},\beta_{j'})\right)\cdot\mu_j\mu_i&=\left(\mathord{\begin{tikzpicture}[baseline=0]
\node at (0,0.65) (i') {$\boxed{i'}$};
\node at (1,0.65) (j') {$\boxed{j'}$};
\node at (0,-0.65) (i) {$i$};
\node at (1,-0.65) (j) {$j$};
\path[-angle 90]
	(i') edge (i)
	(j) edge (j');
\end{tikzpicture}},\left(\beta_i,\frac{\beta_{j'}}{\beta_j},\beta_{i'},\beta_{j'}\right)\right)\cdot\mu_i\\
&=\left(\mathord{\begin{tikzpicture}[baseline=0]
\node at (0,0.65) (i') {$\boxed{i'}$};
\node at (1,0.65) (j') {$\boxed{j'}$};
\node at (0,-0.65) (i) {$i$};
\node at (1,-0.65) (j) {$j$};
\path[-angle 90]
	(i) edge (i')
	(j) edge (j');
\end{tikzpicture}},\left(\frac{\beta_{i'}}{\beta_i},\frac{\beta_{j'}}{\beta_j},\beta_{i'},\beta_{j'}\right)\right)
\end{align*}
By a similar direct calculation, we may show that if $Q_\bullet$ is
\[\begin{tikzpicture}[scale=1.5]
\node at (0,0) (i'1) {$\boxed{i'}$};
\node at (1,0) (j'1) {$\boxed{j'}$};
\node at (0,-1) (i1) {$i$};
\node at (1,-1) (j1) {$j$};
\node at (1.5,-0.5) (or) {or};
\node at (2,0) (i'2) {$\boxed{i'}$};
\node at (3,0) (j'2) {$\boxed{j'}$};
\node at (2,-1) (i2) {$i$};
\node at (3,-1) (j2) {$j$};
\path[-angle 90]
	(i'1) edge (i1)
	(i1) edge (j1)
	(j'1) edge (j1)
	(i'2) edge (i2)
	(j'2) edge (j2)
	(j2) edge (i2);
\end{tikzpicture}\]
then $(Q_\bullet,\beta_\bullet)$ is fixed by $(\mu_i\mu_j)^5$, and hence so is $(Q,\beta)$.
\end{proof}

\begin{rem}
\label{reducetomult1}
Let $(Q,\beta)$ be a seed in a small mutation class, such that $Q$ has arrows between vertices $i$ and $j$ and between vertices $j$ and $k$. We wish to determine the relative orientation of these two arrows. Let $Q'$ be the full subquiver of $Q$ on the vertices $i$, $j$, $k$.

As $(Q,\beta)$ lies in a small mutation class, the maximal multiplicity of any arrow of $Q'$ is $2$. If $Q'$ has a multiplicity $2$ arrow, then by the classification of $3$-vertex quivers with finite mutation class in \cite{derksennewgraphs}*{Thm.~7}, $Q'$ is a directed $3$-cycle in which either all three arrows have multiplicity $2$, or one arrow has multiplicity $2$ and the others have multiplicity $1$. In either case, $Q'$ must contain a directed path through $j$. Thus, to prove that the stabiliser of $(Q,\beta)$ determines the relative orientation of the two arrows under consideration, it suffices to consider the case that $Q'$ only has arrows of multiplicity $1$.
\end{rem}

\begin{lem}\label{edgeorientlem}
Let $i$, $j$ and $k$ be three distinct vertices in a quiver $Q$ from a labelled seed $(Q,\beta)$, such that there is an arrow between $i$ and $j$ and an arrow between $j$ and $k$. Let $Q'$ be the full subquiver on $i$, $j$ and $k$, and assume all arrows of $Q'$ have multiplicity $1$. Then:
\begin{itemize}
\item[(i)]if there is no arrow between $i$ and $k$, then $(\mu_i\mu_j\mu_k)^6\in\Stab_{M_n}(Q,\beta)$ if and only if $Q'$ is a directed path through $j$, so the stabiliser can distinguish
\[\begin{tikzpicture}[scale=1.3]
\node at (0,0) (i1) {$i$};
\node at (1,0) (j1) {$j$};
\node at (2,0) (k1) {$k$};
\node at (2.5,0) (or) {or};
\node at (3,0) (i2) {$i$};
\node at (4,0) (j2) {$j$};
\node at (5,0) (k2) {$k$};
\path[-angle 90]
	(i1) edge (j1)
	(j1) edge (k1)
	(k2) edge (j2)
	(j2) edge (i2);
\end{tikzpicture}\]
from
\[\begin{tikzpicture}[scale=1.3]
\node at (0,0) (i1) {$i$};
\node at (1,0) (j1) {$j$};
\node at (2,0) (k1) {$k$};
\node at (2.5,0) (or) {or};
\node at (3,0) (i2) {$i$};
\node at (4,0) (j2) {$j$};
\node at (5,0) (k2) {$k$};
\path[-angle 90]
	(i1) edge (j1)
	(k1) edge (j1)
	(j2) edge (k2)
	(j2) edge (i2);
\end{tikzpicture}\]
\item[(ii)]if there is an arrow between $i$ and $k$, then $Q'$ contains a directed path through $j$ if and only if $(\mu_i\mu_k\mu_i\mu_k\mu_i\mu_j)^2\notin\Stab_{M_n}(Q,\beta)$, so the stabiliser can distinguish
\[\begin{tikzpicture}[scale=1.2]
\node at (0,0) (i1) {$i$};
\node at (1,0) (j1) {$j$};
\node at (2,0) (k1) {$k$,};
\node at (2.5,0) (i2) {$i$};
\node at (3.5,0) (j2) {$j$};
\node at (4.5,0) (k2) {$k$,};
\node at (5,0) (i3) {$i$};
\node at (6,0) (j3) {$j$};
\node at (7,0) (k3) {$k$,};
\node at (7.5,0) (or) {or};
\node at (8,0) (i4) {$i$};
\node at (9,0) (j4) {$j$};
\node at (10,0) (k4) {$k$};
\path[-angle 90]
	(i1) edge (j1)
	(j1) edge (k1)
	(k1) edge [bend left] (i1)
	(i2) edge (j2)
	(j2) edge (k2)
	(i2) edge [bend right] (k2)
	(k3) edge (j3)
	(j3) edge (i3)
	(i3) edge [bend right] (k3)
	(k4) edge (j4)
	(j4) edge (i4)
	(k4) edge [bend left] (i4);
\end{tikzpicture}\]
from
\[\begin{tikzpicture}[scale=1.2]
\node at (0,0) (i1) {$i$};
\node at (1,0) (j1) {$j$};
\node at (2,0) (k1) {$k$,};
\node at (2.5,0) (i2) {$i$};
\node at (3.5,0) (j2) {$j$};
\node at (4.5,0) (k2) {$k$,};
\node at (5,0) (i3) {$i$};
\node at (6,0) (j3) {$j$};
\node at (7,0) (k3) {$k$,};
\node at (7.5,0) (or) {or};
\node at (8,0) (i4) {$i$};
\node at (9,0) (j4) {$j$};
\node at (10,0) (k4) {$k$};
\path[-angle 90]
	(i1) edge (j1)
	(k1) edge (j1)
	(k1) edge [bend left] (i1)
	(i2) edge (j2)
	(k2) edge (j2)
	(i2) edge [bend right] (k2)
	(j3) edge (k3)
	(j3) edge (i3)
	(i3) edge [bend right] (k3)
	(j4) edge (k4)
	(j4) edge (i4)
	(k4) edge [bend left] (i4);
\end{tikzpicture}\]
\end{itemize}
Together with \Cref{reducetomult1}, this proves claim (b).
\end{lem}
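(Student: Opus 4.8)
The plan is to reduce both parts to finite computations on the three--vertex subquiver $Q'$, in the same spirit as the proof of \cref{edgecountlem}. Every group element appearing in the statement is a word in $\mu_i$, $\mu_j$ and $\mu_k$ alone, so I would regard $Q$ as an ice quiver with mutable set $J=\{i,j,k\}$ and frozen set $F=I\setminus J$; its trivial--coefficient quiver $Q_\circ$ is then exactly $Q'$, and $(Q,\beta)$ is fixed by such a word precisely when the corresponding mutable seed is. By \cref{reducetomult1} I may assume every arrow of $Q'$ has multiplicity $1$, so in each part there are only finitely many possibilities for $Q'$, namely the quivers drawn in the statement.

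For the directions asserting that an element lies \emph{outside} the stabilizer, I would use the trivial--coefficient seed $(Q_\circ,\beta_\circ)$, since $(Q_\circ,\beta_\circ)\cdot g\ne(Q_\circ,\beta_\circ)$ forces $(Q,\beta)\cdot g\ne(Q,\beta)$. In part (i) every orientation gives a quiver of type $\mathsf{A}_3$, and a direct calculation tracking the three cluster variables through the finite mutation class shows that the sink/source orientations (those in which $j$ is not interior) are moved by $(\mu_i\mu_j\mu_k)^6$. In part (ii) the orientations with a directed path through $j$ are the oriented $3$--cycles and the acyclic triangles in which $j$ is interior; for each one checks directly that $(\mu_i\mu_k\mu_i\mu_k\mu_i\mu_j)^2$ fails to fix $(Q_\circ,\beta_\circ)$.

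For the directions asserting that an element lies \emph{inside} the stabilizer, fixing $(Q_\circ,\beta_\circ)$ is not sufficient, because it says nothing about the coefficient part of the seed; here I would instead use the principal--coefficient seed $(Q_\bullet,\beta_\bullet)$, for which $(Q_\bullet,\beta_\bullet)\cdot g=(Q_\bullet,\beta_\bullet)$ does imply $(Q,\beta)\cdot g=(Q,\beta)$, and by \cite[Thm.~3]{gekhtmanproperties} it is enough to check that the labelled cluster returns to its initial value. Thus in part (i) I would verify that $(\mu_i\mu_j\mu_k)^6$ fixes $(Q_\bullet,\beta_\bullet)$ when $Q'$ is a directed path through $j$, and in part (ii) that $(\mu_i\mu_k\mu_i\mu_k\mu_i\mu_j)^2$ fixes $(Q_\bullet,\beta_\bullet)$ when $j$ is a source or a sink of the triangle. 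Since every element in question is a pure mutation word, the identities $(Q\cdot\mu_\ell)^{\op}=Q^{\op}\cdot\mu_\ell$ and $\alpha_g^{Q}=\alpha_g^{Q^{\op}}$ from \cref{regequiveg} show that an orientation and its opposite are fixed by exactly the same mutation words, so this symmetry roughly halves the number of cases to be treated.

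The main obstacle is carrying out the principal--coefficient verifications: each amounts to propagating a cluster of rational functions with formal principal coefficients through a mutation word of length $18$ in part (i), or length $12$ in part (ii), and it is precisely the coefficient dependence --- invisible to the trivial--coefficient calculation --- that must be shown to return. As in \cref{edgecountlem}, I expect this to be most practical to confirm with the aid of Keller's applet \cite{kellerjava} for the quivers and the Sage cluster algebra package \cite{musikercompendium} for the cluster variables; the essential point of the argument above is that, once the ice--quiver reduction is in place, each assertion becomes a single finite check.
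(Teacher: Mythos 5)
Your proposal is correct and follows essentially the same route as the paper's own proof: reduce to the ice-quiver setting with mutable set $\{i,j,k\}$, rule elements out of the stabilizer by checking the trivial-coefficient seed $(Q_\circ,\beta_\circ)$ is moved, and rule them in by checking the principal-coefficient seed $(Q_\bullet,\beta_\bullet)$ is fixed (invoking \cite[Thm.~3]{gekhtmanproperties} so that checking the labelled cluster suffices), with the finitely many cases verified by \cite{kellerjava} or Sage. The only cosmetic difference is your explicit use of the opposite-quiver symmetry to halve the case list, which the paper uses in \cref{edgecountlem} but does not restate here.
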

\begin{proof}
We use the same style of argument as for \Cref{edgecountlem}, by checking that certain seeds with principal or trivial coefficients are or are not fixed by the appropriate sequences of mutations. This is a routine check, so we merely state the necessary calculations.

For (i), it is sufficient to check that labelled seeds with quiver
\[\begin{tikzpicture}[scale=1.3]
\node at (0,0) (i1) {$i$};
\node at (1,0) (j1) {$j$};
\node at (2,0) (k1) {$k$};
\node at (2.5,0) (or) {or};
\node at (3,0) (i2) {$i$};
\node at (4,0) (j2) {$j$};
\node at (5,0) (k2) {$k$};
\path[-angle 90]
	(i1) edge (j1)
	(k1) edge (j1)
	(j2) edge (i2)
	(j2) edge (k2);
\end{tikzpicture}\]
are not fixed by $(\mu_i\mu_j\mu_k)^6$, and that labelled seeds with quiver
\[\begin{tikzpicture}[scale=1.5]
\node at (0,0) (i'1) {$\boxed{i'}$};
\node at (1,0) (j'1) {$\boxed{j'}$};
\node at (2,0) (k'1) {$\boxed{k'}$};
\node at (3,0) (i'2) {$\boxed{i'}$};
\node at (4,0) (j'2) {$\boxed{j'}$};
\node at (5,0) (k'2) {$\boxed{k'}$};
\node at (2.5,-0.5) (or) {or};
\node at (0,-1) (i1) {$i$};
\node at (1,-1) (j1) {$j$};
\node at (2,-1) (k1) {$k$};
\node at (3,-1) (i2) {$i$};
\node at (4,-1) (j2) {$j$};
\node at (5,-1) (k2) {$k$};
\path[-angle 90]
	(i'1) edge (i1)
	(j'1) edge (j1)
	(k'1) edge (k1)
	(i1) edge (j1)
	(j1) edge (k1)
	(i'2) edge (i2)
	(j'2) edge (j2)
	(k'2) edge (k2)
	(k2) edge (j2)
	(j2) edge (i2);
\end{tikzpicture}\]
are fixed by this mutation. As the first two quivers have trivial coefficients, no labelled seed with full subquiver $i\to j\leftarrow k$ or $i\leftarrow j\to k$ will be fixed by $(\mu_i\mu_j\mu_k)^6$, and as the second two quivers have principal coefficients, any labelled seed with full subquiver $i\to j\to k$ or $i\leftarrow j\leftarrow k$ will be fixed by $(\mu_i\mu_j\mu_k)^6$.

For (ii), we must check that labelled seeds with quiver
\[\begin{tikzpicture}[scale=1.2]
\node at (0,0) (i1) {$i$};
\node at (1,0) (j1) {$j$};
\node at (2,0) (k1) {$k$,};
\node at (2.5,0) (i2) {$i$};
\node at (3.5,0) (j2) {$j$};
\node at (4.5,0) (k2) {$k$,};
\node at (5,0) (i3) {$i$};
\node at (6,0) (j3) {$j$};
\node at (7,0) (k3) {$k$,};
\node at (7.5,0) (or) {or};
\node at (8,0) (i4) {$i$};
\node at (9,0) (j4) {$j$};
\node at (10,0) (k4) {$k$};
\path[-angle 90]
	(i1) edge (j1)
	(j1) edge (k1)
	(k1) edge [bend left] (i1)
	(i2) edge (j2)
	(j2) edge (k2)
	(i2) edge [bend right] (k2)
	(k3) edge (j3)
	(j3) edge (i3)
	(i3) edge [bend right] (k3)
	(k4) edge (j4)
	(j4) edge (i4)
	(k4) edge [bend left] (i4);
\end{tikzpicture}\]
are not fixed by $(\mu_i\mu_k\mu_i\mu_k\mu_i\mu_j)^2$, but labelled seeds with quiver
\[\begin{tikzpicture}[scale=1.2]
\node at (0,1) (i'1) {$\boxed{i'}$};
\node at (1,1) (j'1) {$\boxed{j'}$};
\node at (2,1) (k'1) {$\boxed{k'}$};
\node at (2.7,1) (i'2) {$\boxed{i'}$};
\node at (3.7,1) (j'2) {$\boxed{j'}$};
\node at (4.7,1) (k'2) {$\boxed{k'}$};
\node at (5.4,1) (i'3) {$\boxed{i'}$};
\node at (6.4,1) (j'3) {$\boxed{j'}$};
\node at (7.4,1) (k'3) {$\boxed{k'}$};
\node at (8.4,1) (i'4) {$\boxed{i'}$};
\node at (9.4,1) (j'4) {$\boxed{j'}$};
\node at (10.4,1) (k'4) {$\boxed{k'}$};
\node at (0,0) (i1) {$i$};
\node at (1,0) (j1) {$j$};
\node at (2,0) (k1) {$k$,};
\node at (2.7,0) (i2) {$i$};
\node at (3.7,0) (j2) {$j$};
\node at (4.7,0) (k2) {$k$,};
\node at (5.4,0) (i3) {$i$};
\node at (6.4,0) (j3) {$j$};
\node at (7.4,0) (k3) {$k$,};
\node at (7.9,0) (or) {or};
\node at (8.4,0) (i4) {$i$};
\node at (9.4,0) (j4) {$j$};
\node at (10.4,0) (k4) {$k$};
\path[-angle 90]
	(i1) edge (j1)
	(k1) edge (j1)
	(k1) edge [bend left] (i1)
	(i2) edge (j2)
	(k2) edge (j2)
	(i2) edge [bend right] (k2)
	(j3) edge (k3)
	(j3) edge (i3)
	(i3) edge [bend right] (k3)
	(j4) edge (k4)
	(j4) edge (i4)
	(k4) edge [bend left] (i4)
	(i'1) edge (i1)
	(i'2) edge (i2)
	(i'3) edge (i3)
	(i'4) edge (i4)
	(j'1) edge (j1)
	(j'2) edge (j2)
	(j'3) edge (j3)
	(j'4) edge (j4)
	(k'1) edge (k1)
	(k'2) edge (k2)
	(k'3) edge (k3)
	(k'4) edge (k4);
\end{tikzpicture}\]
are fixed by this mutation.
\end{proof}

To carry out the calculations necessary for the proof of \Cref{edgeorientlem}, we recommend using the Java applet \cite{kellerjava} of Keller, or the cluster algebra package in Sage \cite{musikercompendium}. In our experience, it is easiest to use the graphical interface of \cite{kellerjava} to check that the quiver is fixed, but to use Sage to verify that the cluster variables are fixed, as this is more computationally intensive. In fact, by \cite{gekhtmanproperties}*{Thm.~4}, it suffices to check that the cluster variables are fixed.

\Cref{edgecountlem,edgeorientlem} prove claims (a) and (b) respectively, completing the proof of \Cref{mainthm}.

\section{Examples}
\label{orbitgpegs}
Recall that, given a mutation class $\cS$ of labelled seeds, we can draw a labelled graph $\Delta=\lsgraph{\cS}$ with vertex set $\cS$ and an edge labelled $i$ between $s$ and $s\cdot\mu_i$ for all $s\in\cS$. Thus $\Delta$ encodes the data of the $\langle\mu_1,\dotsc,\mu_n\rangle$-action on $\cS$, but not the entire $M_n$-action. Given $\varphi\in\Aut_{M_n}(\cS)$, we have $\varphi(s\cdot\mu_i)=\varphi(s)\cdot\mu_i$ for all $s\in\cS$ and for all $i$, so $\varphi$ determines an automorphism of the labelled graph $\Delta$. Therefore, given any subgroup $W\leq\Aut_{M_n}(\cS)$, we get a quotient labelled graph $W\lquot\Delta$. This graph can also be constructed directly; its vertices are the $W$-orbits, and there is an edge labelled $i$ between $[s]$ and $[s]\cdot\mu_i=[s\cdot\mu_i]$ for each $W$-orbit $[s]$.
\begin{eg}
\label{a2graphs}
There are two labelled quivers in the mutation class of type $\typA2$, namely $1\to2$ and $1\from2$. 
Each is related to the other by mutation at either vertex, so
\[\dcW\lquot\Delta=\mathord{\begin{tikzpicture}[scale=1.5,baseline=0]
\node at (0,0.5) (t) {$\bullet$};
\node at (0,-0.5) (b) {$\bullet$};
\path[font=\scriptsize]
	(t) edge [bend left] node [right] {$2$} (b)
	(t) edge [bend right] node [left] {$1$} (b);
\end{tikzpicture}}\]
As each quiver is the opposite of the other, the two vertices are identified in $\cW\lquot\Delta$, and we have
\[\cW\lquot\Delta=\hspace{-1em}\mathord{\begin{tikzpicture}[every loop/.style={},scale=1.5,baseline=0]
\node at (0,0) (q) {$\bullet$};
\path[font=\scriptsize]
	(q) edge [in=125,out=55,loop] node [above] {$1$} (q)
	(q) edge [in=-125,out=-55,loop] node [below] {$2$} (q);
\end{tikzpicture}}\]
Recall from \Cref{a2eg} that in this case the graph $\Delta$ is a decagon;
\[ 
\Delta \;=\; \mathord{
\begin{tikzpicture}[scale=1.25,baseline=0]
\foreach \n in {1,...,10}
 \node at (126-36*\n:2) (s\n) {$\bullet$};
\foreach \a/\b in {1/2, 3/4, 5/6, 7/8, 9/10}
\path[font=\scriptsize] (s\a) edge node [auto] {$1$} (s\b);
\foreach \a/\b in {2/3, 4/5, 6/7, 8/9, 10/1}
\path[font=\scriptsize] (s\a) edge node [auto] {$2$} (s\b);
\end{tikzpicture}}
\] 
Thus we see that the group $\dcW$ is the cyclic group $C_5$ generated by the rotation taking each vertex to the next but one clockwise, and $\cW$ is the entire automorphism group, which is in this case isomorphic to the dihedral group $D_5$ of symmetries of the pentagon; cf.\ \cite{assemcluster}*{Table~1}.
\end{eg}
\begin{eg}
We now consider the example of a quiver of type $\typA3$; while there are several choices of orientation, all of them are mutation equivalent because the underlying graph is a tree. There are $84$ labelled seeds in a cluster algebra of this type, so $\Delta$ is an $84$ vertex labelled graph. We find that
\[\dcW\lquot\Delta=\mathord{\begin{tikzpicture}[scale=1.5,baseline=0]
\node at (0,2) (1) {$\bullet$};
\node at (-1.5,1) (2) {$\bullet$};
\node at (0,1) (3) {$\bullet$};
\node at (1.5,1) (4) {$\bullet$};
\node at (-2,0) (5) {$\bullet$};
\node at (-1,0) (6) {$\bullet$};
\node at (-0.5,0) (7) {$\bullet$};
\node at (0.5,0) (8) {$\bullet$};
\node at (1,0) (9) {$\bullet$};
\node at (2,0) (10) {$\bullet$};
\node at (-1.5,-1) (11) {$\bullet$};
\node at (0,-1) (12) {$\bullet$};
\node at (1.5,-1) (13) {$\bullet$};
\node at (0,-2) (14) {$\bullet$};
\path[font=\scriptsize]
	(1) edge node [above left] {$1$} (2)
	(1) edge node [right] {$2$} (3)
	(1) edge node [above right] {$3$} (4)
	(2) edge node [above left] {$2$} (5)
	(2) edge node [above right] {$3$} (6)
	(3) edge node [above left] {$1$} (7)
	(3) edge node [above right] {$3$} (8)
	(4) edge node [above left] {$1$} (9)
	(4) edge node [above right] {$2$} (10)
	(5) edge node [above] {$1$} (6)
	(7) edge node [above] {$2$} (8)
	(9) edge node [above] {$3$} (10)
	(5) edge node [below left] {$3$} (11)
	(6) edge node [below right] {$2$} (11)
	(7) edge node [below left] {$3$} (12)
	(8) edge node [below right] {$1$} (12)
	(9) edge node [below left] {$2$} (13)
	(10) edge node [below right] {$1$} (13)
	(11) edge node [below left] {$1$} (14)
	(12) edge node [right] {$2$} (14)
	(13) edge node [below right] {$3$} (14);
\end{tikzpicture}}\]
where the highest and lowest vertices are the two $3$-cycles in the mutation class, and
\[\cW\lquot\Delta=\mathord{\begin{tikzpicture}[every loop/.style={},scale=1.5,baseline=0]
\node at (0,1) (1) {$\bullet$};
\node at (-1,0) (2) {$\bullet$};
\node at (0,0) (3) {$\bullet$};
\node at (1,0) (4) {$\bullet$};
\node at (-1,-1) (5) {$\bullet$};
\node at (0,-1) (6) {$\bullet$};
\node at (1,-1) (7) {$\bullet$};
\path[font=\scriptsize]
	(1) edge node [above left] {$1$} (2)
	(1) edge node [right] {$2$} (3)
	(1) edge node [above right] {$3$} (4)
	(2) edge [bend right] node [left] {$2$} (5)
	(2) edge [bend left] node [right] {$3$} (5)
	(3) edge [bend right] node [left] {$1$} (6)
	(3) edge [bend left] node [right] {$3$} (6)
	(4) edge [bend right] node [left] {$1$} (7)
	(4) edge [bend left] node [right] {$2$} (7)
	(5) edge [in=-125,out=-55,loop] node [below] {$1$} (5)
	(6) edge [in=-125,out=-55,loop] node [below] {$2$} (6)
	(7) edge [in=-125,out=-55,loop] node [below] {$3$} (7);
\end{tikzpicture}}\]
We can see directly in this example (and the previous one) that $\cW\lquot\Delta$ has no non-trivial automorphisms as a labelled graph. Hence the group $\Aut_{M_n}(\cW\lquot\cS)$, consisting of automorphisms of $\cW\lquot\Delta$ commuting with the permutation action on vertices, is also trivial. This is consistent with our earlier observations; it follows from \Cref{autXmodWnormal} that $\cW$ is normal in $\Aut_{M_n}(\cS)$, and in fact we have $\cW=\Aut_{M_n}(\cS)$ by \Cref{autdeterminesquiv}.
\end{eg}
\begin{eg}
We now consider an example of infinite type, namely a non-cyclic orientation of $\afftypA{2}$.
\[Q=\mathord{\begin{tikzpicture}[scale=1.3,baseline=0]
\node at (0,0.5) (1) {$1$};
\node at (-1,-0.5) (3) {$3$};
\node at (1,-0.5) (2) {$2$};
\path[-angle 90]
	(1) edge (3)
	(2) edge (3)
	(1) edge (2);
\end{tikzpicture}}\]
This quiver defines a cluster-infinite cluster algebra, so the graph $\Delta$ is infinite. However, it has finite mutation type, so the quotients $\dcW\lquot\Delta$ and $\cW\lquot\Delta$ are both finite. We have
\[\dcW\lquot\Delta=\mathord{\begin{tikzpicture}[baseline=0]
\node at (0,3.5) (1) {$\bullet$};
\node at (0,2.5) (2) {$\bullet$};
\node at (0,1.5) (3) {$\bullet$};
\node at (0,0.5) (4) {$\bullet$};
\node at (-1,-0.5) (5) {$\bullet$};
\node at (1,-0.5) (6) {$\bullet$};
\node at (-2,-1.5) (7) {$\bullet$};
\node at (2,-1.5) (8) {$\bullet$};
\node at (-3,-2.5) (9) {$\bullet$};
\node at (3,-2.5) (10) {$\bullet$};
\node at (-4,-3.5) (11) {$\bullet$};
\node at (4,-3.5) (12) {$\bullet$};
\path[font=\scriptsize]
	(1) edge [bend right=20] node [above left] {$1$} (11)
	(1) edge node [right] {$2$} (2)
	(1) edge [bend left=20] node [above right] {$3$} (12)
	(2) edge [bend right] node [left] {$1$} (3)
	(2) edge [bend left] node [right] {$3$} (3)
	(3) edge node [right] {$2$} (4)
	(4) edge node [above left] {$1$} (5)
	(4) edge node [above right] {$3$} (6)
	(5) edge node [below] {$2$} (6)
	(5) edge node [below right] {$3$} (7)
	(6) edge node [below left] {$1$} (8)
	(7) edge [bend right] node [above left] {$1$} (9)
	(7) edge [bend left] node [below right] {$2$} (9)
	(8) edge [bend left] node [above right] {$3$} (10)
	(8) edge [bend right] node [below left] {$2$} (10)
	(9) edge node [below right] {$3$} (11)
	(10) edge node [below left] {$1$} (12)
	(11) edge [bend right=20] node [below] {$2$} (12);
\end{tikzpicture}}\]
where the uppermost vertex is $Q$, and
\[\cW\lquot\Delta=\mathord{\begin{tikzpicture}[every loop/.style={},baseline=0]
\node at (0,1.5) (1) {$\bullet$};
\node at (0,0.5) (2) {$\bullet$};
\node at (-1,-0.5) (3) {$\bullet$};
\node at (1,-0.5) (4) {$\bullet$};
\node at (-2,-1.5) (5) {$\bullet$};
\node at (2,-1.5) (6) {$\bullet$};
\path[font=\scriptsize]
	(1) edge [out=35,in=-35,loop] node [right] {$1$} (1)
	(1) edge [out=145,in=-145,loop] node [left] {$3$} (1)
	(1) edge node [right] {$2$} (2)
	(2) edge node [above left] {$1$} (3)
	(2) edge node [above right] {$3$} (4)
	(3) edge node [below] {$2$} (4)
	(3) edge node [above left] {$3$} (5)
	(4) edge node [above right] {$1$} (6)
	(5) edge [out=100,in=170,loop] node [above left] {$1$} (5)
	(5) edge [out=-10,in=-80,loop] node [below right] {$2$} (5)
	(6) edge [out=10,in=80,loop] node [above right] {$3$} (6)
	(6) edge [out=-100,in=-170,loop] node [below left] {$2$} (6);
\end{tikzpicture}}\]
As in the previous two examples, $\cW\lquot\Delta$ has no non-trivial automorphisms as a labelled graph, so $\Aut_{M_n}(\cW\lquot\cS)=\set{1}$.
\end{eg}
\begin{eg}\label{evilquiver}
Next, we consider an infinite type example with infinite mutation class, namely the quiver
\[Q=\mathord{\begin{tikzpicture}[scale=1.3,baseline=0]
\node at (0,0.5) (1) {$1$};
\node at (-1,-0.5) (3) {$3$};
\node at (1,-0.5) (2) {$2$};
\path[-angle 90,font=\scriptsize]
	(1) edge node [above right] {$3$} (2)
	(2) edge node [below] {$3$} (3)
	(3) edge node [above left] {$3$} (1);
\end{tikzpicture}}\]
The triples of edge multiplicities occurring in the quivers in this class are all solutions to the Markov-type equation $x^2+y^2+z^2=xyz$, and mutations correspond to passing to a neighbour (see \cite{casselsintroduction}*{\S II.3}); if $(a,b,c)$ is a solution, then so are $(a,b,ab-c)$, $(a,ac-b,c)$ and $(bc-a,b,c)$.

It follows that any sequence of mutations starting from $Q$ (such that successive mutations are at distinct vertices) increases the maximal multiplicity of an arrow in the quiver, and thus no quiver occurs twice in the same component of $\Delta$, even up to similarity. The graph $\Delta$ has six components, one for each permutation of the labels of the initial seed, each of which is a $3$-regular tree. Each cluster automorphism is determined by a permutation of the initial cluster, and thus $\cW\isom\cAut{\cA}\isom\Sym{3}$. The direct cluster automorphisms must preserve the cyclic ordering of the initial labelled cluster, and so $\dcW\isom\dircAut{\cA}\isom C_3$ is cyclic of order $3$.

The graph $\cW\lquot\Delta$ is the infinite $3$-regular labelled tree
\[\cW\lquot\Delta=\mathord{\begin{tikzpicture}[baseline=0]
\node at (-2,2) (dots1) {$\ddots$};
\node at (-1,2) (up1) {};
\node at (1,2) (up2) {};
\node at (2,2) (dots2) {$\iddots$};
\node at (-2,1) (left1) {};
\node at (-1,1) (1) {$\bullet$};
\node at (1,1) (2) {$\bullet$};
\node at (2,1) (right1) {};
\node at (0,0) (3) {$\bullet$};
\node at (0,-1) (4) {$\bullet$};
\node at (-1,-2) (left2) {};
\node at (1,-2) (right2) {};
\node at (0,-2.2) (dots3) {$\vdots$};
\path[font=\scriptsize]
	(1) edge node [right] {$3$} (up1)
	(1) edge node [below] {$2$} (left1)
	(1) edge node [below left] {$1$} (3)
	(2) edge node [left] {$3$} (up2)
	(2) edge node [below] {$1$} (right1)
	(2) edge node [below right] {$2$} (3)
	(3) edge node [right] {$3$} (4)
	(4) edge node [above left] {$2$} (left2)
	(4) edge node [above right] {$1$} (right2);
\end{tikzpicture}}\]
which still has automorphisms as a labelled graph. However, only the trivial automorphism commutes with the permutation action; the only similarity class of seeds stabilised by all permutations is the class $[s]$ of those with quiver $Q$ or $Q^{\op}$, so any $\varphi\in\Aut_{M_n}(\cW\lquot\cS)$ satisfies $\varphi([s])=[s]$, and so $\Aut_{M_n}(\cW\lquot\cS)=\set{1}$.

However, we will now show that there are automorphisms of $\cS$, commuting with the $M_n$-action, that do not lie in $\cW$. Let $s\in\cS$ and consider $g\in\Stab_{M_n}(s)$. We may write $g=\mu_{i_1}\dotsm\mu_{i_k}\sigma$, where $i_1,\dotsc,i_k\in I$ and $\sigma\in\Sym{n}$. If $\sigma$ is not the identity, then $s\cdot g$ lies in a different component of $\Delta$ to $s$, contradicting $g\in\Stab_{M_n}(s)$. No seed is fixed by a non-trivial sequence of mutations, so in fact we must have $g=1$. Thus every seed of $\cS$ has trivial stabiliser, so $M_n$ acts freely and transitively on $\cS$, and $\Aut_{M_n}(\cS)\isom M_n$ is infinite. In this case, $\cW$ is finite, so $\cW\ne\Aut_{M_n}(\cS)$. This example shows that the conclusion of \Cref{mainthm} may not hold for mutation classes that are not small.

In this example, $\cW$ is not normal in $A$. Indeed, by \Cref{autXmodWnormal}, there is an injection
\[\norm{A}{\cW}/\cW\incl\Aut_{M_n}(\cW\lquot\cS)=\{1\},\]
so $\norm{A}{\cW}=\cW\ne A$. It is also straightforward to construct explicit examples of automorphisms that do not normalise $\cW$. As $\Stab_{M_n}(s_1)=\Stab_{M_n}(s_2)$ for any $s_1,s_2\in\cS$, the map $\varphi\colon s_1\cdot g\mapsto s_2\cdot g$ is always a well-defined automorphism of $\cS$. Let $s$ be a seed with quiver $Q$, and let $w(s\cdot g)=s\cdot\sigma g$, where $\sigma$ is the permutation $\tran{1}{2}$. As $Q\cdot\sigma\approx Q$, the seeds $s\cdot g$ and $s\cdot\sigma g$ have similar quivers for all $g$, and so $w\in\cW$. Now let $\varphi(s\cdot g)=s\cdot\mu_1g$. Then
\[\varphi w\varphi^{-1}(s)=s\cdot\mu_1\tran{1}{2}\mu_1=s\cdot\mu_1\mu_2\tran{1}{2}\]
is not similar to $s$, so $\varphi w\varphi^{-1}\notin\cW$.
\end{eg}

In each of these examples, we may observe that $\cW/\dcW\isom C_2$ is cyclic of order $2$. This is not a general phenomenon, but by \cite{assemcluster}*{Thm.~2.11} is equivalent to the fact that each quiver $Q$ is mutation equivalent to some $Q'$ isomorphic to $Q^{\op}$ as an unlabelled quiver.

\bibliographystyle{amsalpha}

\bibliography{../../mainbib}

\end{document}